\pgfplotsset{compat=newest}
\title[] 
{Integrability of Koszul connections on complex vector bundles over domains in ${\mathbf C}^n$ 
}
\author[]{Xianghong Gong$^{\dag}$}
\date{\today}
 \address{Department of Mathematics,
 University of Wisconsin-Madison, Madison, WI 53706}
 \email{gong@math.wisc.edu}
\thanks{$^{\dag}$Partially supported by  NSF grant DMS-2349865}
 \keywords{Homotopy formula, $a_q$ domain, Koszul connection}
 \subjclass[2020]{32A26,   32F10, 32L05, 32W05, 53B05}
\newcommand*\bigcdot{\mathpalette\bigcdot@{.75}}
\newcommand*\bigcdot@[2]{\mathbin{\vcenter{\hbox{\scalebox{#2}{$\m@th#1\bullet$}}}}}
\newtheorem{thm}{Theorem}[section]
\newtheorem{cor}[thm]{Corollary}
\newtheorem{prop}[thm]{Proposition}
\newtheorem{lemma}[thm]{Lemma}
\theoremstyle{definition}
\newtheorem{defn}[thm]{Definition}
\newtheorem{rem}[thm]{Remark}
\renewcommand{\th}[1]{\begin{thm}\label{#1}}
\renewcommand{\eth}{\end{thm}}
\newcommand{\co}[1]{\begin{cor}\label{#1}}
\newcommand{\eco}{\end{cor}}
\renewcommand{\le}[1]{\begin{lemma}\label{#1}}
\newcommand{\ele}{\end{lemma}}
\newcommand{\pr}[1]{\begin{prop}\label{#1}}
\newcommand{\epr}{\end{prop}}
\newcommand{\ga}{\begin{gather}}
\newcommand{\ega}{\end{gather}}
\newcommand{\gan}{\begin{gather*}}
\newcommand{\egan}{\end{gather*}}
\newcommand{\al}{\begin{align}}
\newcommand{\eal}{\end{align}}
\newcommand{\aln}{\begin{align*}}
\newcommand{\ealn}{\end{align*}}
\newcommand{\eq}[1]{\begin{equation}\label{#1}}
\newcommand{\eeq}{\end{equation}}
\newcommand{\DD}[2]{\frac{\partial #1}{\partial #2}}
\newcommand{\f}[2]{\frac{#1}{#2}}
\newcommand{\ci}{~\cite}
\newcommand{\cc}{{\bf C}}
\newcommand{\nn}{{\bf N}}
\newcommand{\rr}{{\bf R}}
\newcommand{\ov}{\overline}
\newcommand{\RE}{\operatorname{Re}}
\renewcommand{\dbar}{\overline\partial}
\renewcommand{\dbar}{\overline\partial}
\newcommand{\cL}{\mathcal}
\newcommand{\all}{\alpha}
\newcommand{\del}{\delta}
\newcommand{\Del}{\Delta}
\newcommand{\var}{\varphi}
\newcommand{\e}{\epsilon}
\newcommand{\om}{\omega}
\newcommand{\Om}{\Omega}
\newcommand{\la}{\lambda}
\newcommand{\pd}{\partial}
\newcommand{\re}[1]{(\ref{#1})}
\newcommand{\rea}[1]{$(\ref{#1})$}
\newcommand{\rl}[1]{Lemma~\ref{#1}}
\newcommand{\rp}[1]{Proposition~\ref{#1}}
\newcommand{\rt}[1]{Theorem~\ref{#1}}
\newcommand{\rta}[1]{Theorem~$\ref{#1}$}
\newcommand{\db}{\dbar}
\newcounter{pp}
\newcommand{\bpp}{\begin{list}{$\hspace{-1em}(\alph{pp})$}{\usecounter{pp}}}
\newcommand{\epp}{\end{list}}
\newcounter{ppp}
\newcommand{\bppp}{\begin{list}{$\hspace{-1em}(\roman{ppp})$}{\usecounter{ppp}}}
\newcommand{\eppp}{\end{list}}
\def\beq{\begin{equation}}
\def\eeq{\end{equation}}
\begin{document}

\begin{abstract}We study invertible matrix solutions $A$    to the equation $A^{-1}\overline\partial A=\omega^{(0,1)}$ on a small open subset $U$ of the closure $\overline M$ of a domain $M\subset{\mathbf C}^n$, where $\omega^{(0,1)}$ is a matrix of $(0,1)$ forms on $\overline M$ satisfying 
the formal integrable condition $\overline{\partial}\omega^{(0,1)}=\omega^{(0,1)}\wedge\omega^{(0,1)}$. For a $C^2$ domain $M$   that is either strongly pseudoconvex or  has at least $3$ negative Levi eigenvalues at a boundary point   contained in $U$, we obtain existence and sharp regularity of the solutions.
\end{abstract}

 \maketitle


\setcounter{thm}{0}\setcounter{equation}{0}

\section{Introduction}\label{sec1} 
 Let $M\subset\cc^n$ be a relatively compact domain  with boundary $\pd M\in C^2$. Let $E$ be a smooth (i.e. $C^\infty$) complex vector bundle   over the closure $\ov M$. Denote by $C^\infty(\ov M, E), C_{(k)}^\infty(E,\ov M)$ the  spaces of smooth sections of $E$ and $E$-valued smooth $k$-forms on $\ov M$, respectively.
A Koszul {\it connection} $D$ in $E$ of is a $\cc$-linear map 
from $
 C_{(k)}^\infty(\ov M,E)$ to $ C^\infty_{(k+1)}(\ov M,E)
$
satisfying,  for any   $f\in C_{(k)}^\infty(\ov M)$ and     $s\in C^\infty(\ov M,E)$,  
$$
D(fs)=df\otimes s +(-1)^kf\wedge Ds.
$$     The curvature $R$ of $D$ is defined as
$
R=D\circ D\colon C^\infty_{(k)}(\ov M,E)\to C^\infty_{(k+2)}(\ov M,E).
$
  Let $e=(e_1,\dots, e_{k_0})^t$ be a smooth local frame of $E$. Then
$$
De_i=\sum \omega^j_i\otimes e_j, \ De=\om e;\quad  D^2e_i=\sum \Omega^j_i\otimes e_j, \ D^2e=\Omega e,
$$
where $\om$ and $\Om$ are  the connection and curvature forms.  
If $\hat e=Ae$ is another frame, then
\eq{hatomom}
\hat \om =(dA)A^{-1} +A\om A^{-1}, \quad  \widehat\Om=A\Om A^{-1}.
\eeq
Decomposing into types,  $\om=\om^{(1,0)}+\om^{(0,1)}$ and
$\Om=\Om^{(2,0)}+\Om^{(1,1)}+\Om^{(0,2)}$, yields  
\eq{om02}
 \Om^{(0,2)}=\db\om^{(0,1)}-\om^{(0,1)}\wedge\om^{(0,1)}.
\eeq

The {\it  integrability problem} studied in this paper is to find a frame   $\hat e=Ae$ for which $\hat\om^{(0,1)}=0$, which is equivalent to solving
\eq{dbAA}
A^{-1}\db A+\om^{(0,1)}=0.
\eeq
If such frames $\hat e_k$ exist on an open covering $\{U_k\}$ of $\ov M$, then on overlap $U_j\cap U_k$ their transition matrices satisfy    $\db  A_{kj}=0$. In this case we say that $(E,\ov M,D)$ carries a   holomorphic vector bundle structure.    By \re{hatomom}-\re{om02},  the solvability of \re{dbAA} requires the {\it formal} integrability condition
\eq{Om02=0}
\db\om^{(0,1)}=\om^{(0,1)}\wedge\om^{(0,1)},\quad \text{i.e.}\quad \Om^{(0,2)}=0.
\eeq
Koszul and Malgrange~\ci{MR131882} proved that when the boundary is not considered,   the formal integrability condition is sufficient for the integrality; see also Kobayashi~\ci{MR0909698}*{Prop.~3.7, p.~17} for a proof using the Newlander-Nirenberg theorem.

Our main result establishes integrability and optimal boundary regularity  for the H\"older-Zygmund class (see Section 3 for definition) under the formal integrability condition on $M$.
\begin{thm}\label{vbint-}
Let $0<r\leq\infty$. 
Let $M$ be a domain in $\cc^n$ with $\pd M\in C^2$. Assume that   the Levi form of $\pd M$ on $T^{(1,0)}\pd M$ has either $n-1$ positive or at least three negative eigenvalues at  $\zeta_0\in\pd M$.  
Let $E$ be a complex vector bundle of class $Lip$ $($resp. $\Lambda^{r+1})$ over $\ov M$. Assume that a connection  $D\colon Lip(M,E)\to L_{(1)}^\infty(M,E)$  $($resp. $\Lambda^{r+1}(M,E)\to \Lambda^{r}_{(1)}(M,E))$ satisfies $\Om^{(0,2)}=0$ on $ M$ in the sense of distributions. Then $E|_U$ has a    holomorphic vector bundle structure, where $U$ is some open subset of $\ov M$ and $\zeta_0\in U$. Furthermore, the holomorphic structure is of class $C^{1/2}$ $($resp. $\Lambda^{r+1/2})$ with respect to the original $Lip$ $($resp. $\Lambda^{r+1})$ structure of $E|_U$.
\end{thm}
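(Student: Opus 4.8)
The plan is to reduce the matrix equation $A^{-1}\db A = -\om^{(0,1)}$ to a fixed-point problem that can be solved using a homotopy formula for $\db$ with good boundary regularity near $\zeta_0$. First I would shrink to a small coordinate ball $U$ centered at $\zeta_0$ on which $E$ is trivialized by the given $Lip$ (resp. $\Lambda^{r+1}$) frame $e$, so that $\om^{(0,1)}$ becomes an honest matrix of $(0,1)$-forms with entries in $L^\infty$ (resp. $\Lambda^r$), satisfying the formal integrability condition $\db\om^{(0,1)} = \om^{(0,1)}\wedge\om^{(0,1)}$ in the sense of distributions. Since $\pd M$ is $a_q$ near $\zeta_0$ for the relevant $q$ (either strongly pseudoconvex, hence $a_1$, or having at least three negative Levi eigenvalues, which gives the condition needed for $(0,1)$-forms), I would invoke the homotopy formula established earlier in the paper: there is an operator, call it $H$, with $\db H u + H \db u = u$ on $U\cap M$ for $(0,1)$-forms $u$, and $H$ gains $1/2$ derivative in the H\"older–Zygmund scale, i.e. $H\colon \Lambda^r_{(1)} \to \Lambda^{r+1/2}$ and $L^\infty_{(1)}\to C^{1/2}$, uniformly.

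Next I would set up the iteration. Write $A = I + a$ and rewrite $\db A = -A\om^{(0,1)}$, i.e. $\db a = -(I+a)\om^{(0,1)}$. Applying $H$ and using the homotopy formula, a solution is produced by the fixed-point equation
\[
a = -H\bigl((I+a)\om^{(0,1)}\bigr),
\]
provided the right-hand side is $\db$-closed in the appropriate sense so that $H$ inverts $\db$ on it; here the formal integrability condition is exactly what guarantees $\db\bigl((I+a)\om^{(0,1)}\bigr) = 0$ once $\db a = -(I+a)\om^{(0,1)}$ holds, making the scheme consistent. I would run a contraction-mapping argument on a small ball in $C^{1/2}$ (resp. $\Lambda^{r+1/2}$): after further shrinking $U$ so that $\|\om^{(0,1)}\|$ is small (using continuity of the $L^\infty$/$\Lambda^r$ norm over small balls — or, if $\om$ is merely bounded and not small, rescaling the coordinates to make the operator norm of $H$ times $\|\om^{(0,1)}\|$ small), the map $a\mapsto -H((I+a)\om^{(0,1)})$ is a contraction on a small ball, yielding a unique small solution $a$, hence $A = I + a$ with $A^{-1}\db A = -\om^{(0,1)}$. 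Invertibility of $A$ near $\zeta_0$ follows automatically since $a$ is small in sup-norm. The regularity claim $C^{1/2}$ (resp. $\Lambda^{r+1/2}$) is read off from the mapping property of $H$; one can bootstrap using $\db a = -(I+a)\om^{(0,1)}$ and the fact that $(I+a)\om^{(0,1)}$ lies in $L^\infty$ (resp. $\Lambda^r$) since $a\in C^{1/2}\subset L^\infty$ (resp. $a\in\Lambda^{r+1/2}\subset\Lambda^r$ when $r+1/2$ is not an integer-type obstruction — handled by the Zygmund formulation).

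Finally, I would check that solving \re{dbAA} locally near $\zeta_0$ gives what the theorem asserts: the new frame $\hat e = Ae$ has $\hat\om^{(0,1)} = (\db A)A^{-1} + A\om^{(0,1)}A^{-1} = 0$ by \re{hatomom} and the type decomposition, so on $U$ (and on overlaps of such frames, where transition matrices are $\db$-closed, i.e. holomorphic) $E|_U$ acquires a holomorphic vector bundle structure, and the transition between the old $Lip$ (resp. $\Lambda^{r+1}$) frame and the new one is exactly $A\in C^{1/2}$ (resp. $\Lambda^{r+1/2}$), giving the stated regularity of the holomorphic structure relative to the original one.

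\textbf{Main obstacle.} The crux is the boundary regularity: away from the boundary this is the classical Koszul–Malgrange result via Newlander–Nirenberg, but near $\zeta_0\in\pd M$ one needs a $\db$-homotopy operator on $U\cap M$ that (i) exists under the $a_q$ geometry guaranteed by the Levi eigenvalue hypothesis — this is why one needs either strong pseudoconvexity or \emph{three} negative eigenvalues, so that the relevant $(0,1)$-level cohomology vanishes locally with a bounded solution operator — and (ii) gains a full $1/2$ derivative with uniform estimates, which is what makes the contraction close in the sharp H\"older–Zygmund class. I expect that establishing or correctly invoking this sharp half-gain homotopy formula, and verifying that the nonlinear term $(I+a)\om^{(0,1)}$ stays in the function class where $H$ applies (including the distributional sense of $\Om^{(0,2)}=0$ being preserved along the iteration), is the technical heart of the argument; the contraction and the frame-change bookkeeping are then routine.
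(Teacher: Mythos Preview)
Your overall architecture—trivialize locally, invoke a $\db$-homotopy operator with half-derivative gain, make $\omega^{(0,1)}$ small by scaling, and iterate—matches the paper's, but two points diverge, and the second is a real gap.

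On the iteration: the paper does not run a contraction. It uses a KAM/Nash--Moser scheme: at step $k$ one performs the frame change $A_k = I - P_k\omega_k$ (with $P_k$ the homotopy operator on $D_k$), producing a new connection form $\omega_{k+1}=Q_k(\omega_k\wedge\omega_k)+\text{lower order}$ on the shrunk domain $D_{k+1}$, so that $\|\omega_{k+1}\| \lesssim \theta_k^{-c}\|\omega_k\|^2$. The quadratic gain yields $\|\omega_k\| \leq \delta_0^{(3/2)^k}$, which absorbs the losses $\theta_k^{-c}$ coming from the shrinking-domain estimates of Theorems~\ref{conv-est}, \ref{concave-est}, \ref{Linf}. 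Your Picard map $a\mapsto -H((I+a)\omega)$ converges only geometrically, and since the available estimates are of the form $\|H\varphi\|_{D_{(1-\theta)r}}\le C\theta^{-c}\|\varphi\|_{D_r}$, the scheme does not close on a single Banach space; if you shrink at each step the accumulated factor $\prod_k\theta_k^{-c}$ swamps $(C\|\omega\|)^k$. (A fixed-domain contraction \emph{might} be salvageable in the strictly pseudoconvex case, but you would need an estimate without shrinking, which the paper does not provide.) Also, the initial smallness is obtained via the \emph{anisotropic} dilation $(\tilde z_1,\tilde z')\mapsto(\epsilon^2\tilde z_1,\epsilon\tilde z')$, which keeps the rescaled domain in the same configuration class so the homotopy constants stay uniformly bounded; an isotropic rescaling flattens the boundary and loses this control.

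The genuine gap is the $3$-concave case with $\partial M\in C^2$. You assume $H\colon L^\infty_{(1)}\to C^{1/2}$ and $\Lambda^r_{(1)}\to\Lambda^{r+1/2}$ in both geometries, but this fails for the concave configuration when $\rho^1$ is only $C^2$: the concave homotopy kernel carries factors $\nabla^2_z\rho^1$, so Theorem~\ref{Linf} yields only $L^\infty\to C^0$, and the estimate \eqref{hqfr12} needs $\rho^1\in\Lambda^{r+5/2}$ to give the $+1/2$ gain. The paper therefore adds a step you did not anticipate: after the $L^\infty$-level KAM produces a continuous invertible $A_\infty$, one takes, for each boundary point $x$, a nearby \emph{smooth} interior concave domain $\hat D^1\subset D^1$ (on which the sharp $\Lambda^{a+1/2}$ estimate is available), solves there to obtain $\hat A_x\in\Lambda^{a+1/2}$, observes that $\hat A_x A_\infty^{-1}$ is holomorphic on the overlap, and uses Hartogs extension across the concave boundary together with Cauchy estimates to push the $\Lambda^{a+1/2}$ regularity from $\hat A_x$ onto $A_\infty$, uniformly in $x$. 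Without this Hartogs device your argument cannot reach $C^{1/2}$ (resp.\ $\Lambda^{r+1/2}$) in the concave case.
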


 Our theorem concerns the integrability of $(E,D)$ in the H\"older-Zygmund spaces $\Lambda^r$, which is the standard H\"older spaces $C^r$ when $r$ is not an integer; see Section 5 for definition. The theorem also treats  a Koszul connection $D$ that maps the space $ Lip(M,E)$ of Lipschitz sections  
 into  $ L^\infty_{(1)}( M,E)$, the space  of $E$-valued $1$-forms of class $L^\infty$ on $M$. Such a connection extends trivially to a Koszul connection on $E$ over $\ov M$.   The theory of $L^\infty$ connections, which is of independent interest, is required  even when the original connections are $C^\infty$ for the (strictly) $3$-concave case. \rt{vbint-}  is reduced to \rt{vbint}, and the reduction requires a careful discussion in Section~\ref{sec:int-formal} of   $L^\infty$ connections on Lipschitz bundles  and of  {\it admissible} frame changes for which the transformation law \re{hatomom} remains valid; see \rp{d2=0}.

\medskip

We now outline our approach. Equation \re{dbAA} may be viewed as a nonlinear $\db$-equation, and the integrability condition \re{Om02=0} is also nonlinear. 
 We   apply homotopy formulas to the study of \re{dbAA}. 
The use of integral representations to establish regularity of $\dbar$-solutions on strongly pseudoconvex domains in $\cc^n$ has a long history. Sup-norm estimate for $\db$-solutions to $(0,1)$-forms was proved by  Grauert and Lieb~\cite{MR273057} and  Henkin\ci{MR0249660}.   Kerzman~\cite{MR0281944} obtained  $L^p$ and $C^{\beta}$ estimates of $\db$-solutions for $(0,1)$-forms and all $\beta<1/2$.
  Lieb~\ci{MR283235} obtained the $L^\infty$ and the $C^\beta$ estimates of $\db$-solutions for $(0,q)$-forms.  
Henkin and Romanov~\cite{MR0293121}  proved the sharp $C^{1/2}$ estimate of 
$\db$-solutions for continuous $(0,1)$-forms.

We recall several important developments on integral representation formulas for  $\dbar$   that lead to derivative estimates on strongly pseudoconvex domains $D\subset\cc^n$. When $\pd D$ is of class $ C^{k+2}$,   Siu~\cite{MR330515} showed that the Henkin solution operator satisfies a $C^{k+1/2}$ estimate  for  $\db$-closed $(0,1)$ forms of class $C^k$, and for  $\db$-closed $(0,q)$-forms   with $q\geq1$,  Lieb and Range~\cite{MR597825} constructed a new  $\db$ solution operator   and established  the same estimate. When $\pd D$ is only $ C^2$,  a homotopy formula was constructed  in~\cite{MR3961327} using the Stein extension, producing   homotopy operators with $\Lambda^{r+1/2}$ estimate for $r>1$.    Shi and Yao~\cites{MR4688544,MR4861589} later developed   a homotopy formula   employing the Rychkov extension and obtained $\Lambda^{r+1/2}$ estimates for all $r>0$, as well as $H^{s+1/2,p}$ estimates  for $1<p<\infty$ and $s>1/p$ when $\pd D\in C^2$; they also showed that the  estimates hold  for all $s\in\rr$ when $\pd D$ is sufficiently smooth.  Very recently, Yao~\ci{yaoc2} further reduced the boundary smoothness assumption to $ C^2$ for all $s\in\rr$. We also refer the reader to references cited in~\cites{MR3961327,MR4688544,MR4861589,yaoc2}.

We also note that   Range and Siu~\ci{MR338450} proved the $C^\beta$ estimate for all $\beta<1/2$ for $\db$-solutions for continuous $(0,q)$-forms on the (real) transversal intersection of
  strictly pseudoconvex domains. Whether the optimal gain of $1/2$  derivative   estimates  holds for $\db$-solutions on such transversal intersections remains an open problem.
  Higher-order derivative estimates in this setting were obtained by Brinkmann~\ci{Br84}, Michel~\ci{MR928297}, and Michel-Perotti~\ci{MR1038709}.
 Peters~\ci{MR1135535}  constructed a homotopy formula for the weakly transversal intersection of strictly pseudoconvex domains and established higher-order estimates, though with a loss of derivatives.
 It would be interesting to determine whether the results mentioned above can be applied to the integrability problem for intersections of strictly pseudoconvex domains. We also remark that the integrability of almost CR vector bundles on strongly pseudoconvex hypersurfaces in $\mathbb C^n$ with $n\geq4$ was proved by Webster~\ci{MR1128608}, and sharper regularity results in the finitely smooth case were obtained by Gong-Webster~\cites{MR2742034,MR2829316}.
It remains an open question whether \rt{vbint-} holds if $\pd M$ has two negative Levi eigenvalues. This appears to be related to the unsettled embedding problem for strongly pseudoconvex local CR structures in $\rr^5$, investigated by Webster~\cite{MR0995504} and Gong-Webster~\ci{MR2868966}.

We organize the paper as follows. In Section 2, we examine the formal integrability condition in detail for connections on Lipschitz complex vector bundles and introduce the notion of admissible frame changes. Sections 3 and 4 reformulate the local homotopy formulas in~\cites{MR986248,MR4866351} 
 for shrinking domains. In Section 5, we recall  estimates established in \cites{MR986248,gong-shi-nn} and derive  new estimates on shrinking domains. Finally, in Section 6, we complete the proof of \rt{vbint-}  by establishing \rt{vbint}.


\setcounter{thm}{0}\setcounter{equation}{0}
\section{Formal integrability, optimal regularity, and counter examples}\label{sec:int-formal}

%

Let $M\subset\cc^n$ be a relatively compact domain with $C^2$ boundary. We say that $E$ is a {\it Lipschitz complex vector bundle}, if there exits an open covering $\{U_k\}$ of $\ov M$ and local frames $e_k$ over each $U_k$ such that the transition functions $g_{jk}\colon U_j\cap U_k\to GL(k_0,\cc)$ are Lipschitz, satisfy  $g_{jk}=g_{kj}^{-1}$, and obey the cocycle condition $g_{ij}g_{jk}g_{ki}=I$.  We denote by $Lip(M), Lip(M,E),Lip_{(k)}(M,E)$, and $ Lip_{(p,q)}(M,E)$ the Lipschitz  spaces  of functions,   sections of $E$, $E$-valued $k$-forms, and   $E$-valued $(p,q)$-forms on $\ov M$. These spaces are defined analogously when the Lipschitz class is replaced by other classes, such as $L^\infty, C^r$, or  H\"older-Zygmund class $\Lambda^r$ defined in Section 5.

A Koszul connection $D$ on $E$ is said to be of class $L^\infty$ if   
$$
D\colon Lip(M,E)\to L_{(1)}^\infty (M,E)
$$ and, for every   $f\in Lip(M)$ and   $s\in Lip(M,E)$,
\eq{Dfs}
D(fs)= df \otimes s+fDs.
\eeq
The connection $D$ extends naturally to  $E$-valued   forms of all degrees. If $f$ is a $Lip$ section of  $k$-form, analogue of \re{Dfs} is
$$
D(fs)=df\otimes s+(-1)^kf\wedge Ds.
$$

To determine the curvature for $L^\infty$ connections on Lipschitz bundles, it is convenient  to restrict attention to the interior $M$ of $\ov M$. We   define the curvature operator $R$ of $D$  by
\eq{R=D2}
R=D\circ D\colon Lip_{loc}(M,E)\to \cL D'_{(2)}(M,E).
\eeq
where $ Lip_{loc}(M,E)$ denotes locally Lipschitz sections of $E|_M$, and $\cL D'_{(2)}(M,E)$ denotes   $E$-valued distributional  $2$-forms on $M$.

Let $e=(e_1,\dots, e_r)^t$ be a Lipschitz    local frame over $U\cap M$. Then  
$$
De_i=\sum \omega^j_i \otimes e_j, \quad \om^j_i\in L^\infty_{(1)}( U)
$$
and the matrix $\om=(\om^j_i)$  is   the connection 1-form of $D$ with respect to the  frame $e$. In matrix notation we simply write $De=\om e$.  
For the curvature, we compute $D^2e=D(\om e)=d\om \otimes e-\om\wedge De$, and therefore 
$$
D^2e=\Om e, \quad \Om=d\om-\om\wedge\om,
$$
where $d\om$ is in the sense of distributions. If $\hat e=Ae$ is another Lipschitz frame, then
\eq{hatomA}
\hat \om =(dA)A^{-1} +A\om A^{-1}, \quad  \widehat\Om=A\Om A^{-1}.
\eeq
Of course, both \re{R=D2} and \re{hatomA} require justification; this is provided in \rp{d2=0}.

Assuming   \re{R=D2} and \re{hatomA},  the connection $D$ on $Lip_{(p,q)}(E)$  decomposes as $D=D'+D''$ where
$$
D'\colon Lip_{(p,q)}(E)\to L^\infty_{(p+1,q)}(E), \quad D''\colon Lip_{(p,q)}(E)\to L^\infty_{(p,q+1)}(E).
$$
Accordingly,  $R=D'\circ D'+(D'\circ D''+D''\circ D')+D''\circ D''$. Writing $\om=\om^{(1,0)}+\om^{(0,1)}$ and
$\Om=\Om^{(2,0)}+\Om^{(1,1)}+\Om^{(0,2)}$, the condition $D''\circ D''=0$ yields the $(0,2)$  component identity
\eq{Om02=0}
\Om^{(0,2)}=\db\om^{(0,1)}-\om^{(0,1)}\wedge\om^{(0,1)}=0.
\eeq    

The transformation law \re{hatomA} and the invariance of   \re{Om02=0} under suitable frame changes will play a fundamental role in the proof   \rt{vbint} and in    reducing \rt{vbint-} from \rt{vbint}.   
 
\le{dist} Let $U$ be an open subset of $\cc^n$.  Let $A$ be an invertible $k_0\times k_0$ matrix of continuous functions on $U$. Assume that $\db A\in L_{loc}^1(U)$ in the sense of distributions. Then the following hold in the sense of distributions on   $U$.
\bpp
\item   $\db A^{-1}\in L_{loc}^1(U)$       and $\db A^{-1}=-A^{-1}(\db A)A^{-1}$. 
When $k_0=1$, $\db \log A=A^{-1}\db A$. \item  Assume further that $\db A\in L_{loc}^2(U)$. Then
\eq{db2=0}
\db((\db A)A^{-1})=-\db A\wedge\db A^{-1},
\eeq
where $ \db A^{-1}:=A^{-1}(\db A)A^{-1}\in L^2_{loc}$,  and the definition is justified by $(a)$.
\epp
\ele
\begin{proof}Take $\chi\in C^\infty_0(\cc^n)$ satisfying $\int\chi=1$. Let $\chi_\e(z)=\e^{-2n}\chi(\e^{-1}z)$ and $A_\e=\chi_\e\ast A$.    As $\e\to0$, $A_\e$, $ A_\e^{-1}$, and (when $k_0=1$) $\log A_\e$, converge to $A,A^{-1}, \log A$, respectively, in $L_{loc}^\infty(U)$, and $ \db A_\e$ converges to $\db A$ in $L^1_{loc}(U)$.    Moreover,
$$
\db A_\e^{-1}=-A_\e^{-1}(\db A_\e)A_\e^{-1}$$
and, when $k_0=1$,  $\db\log A_\e=A_\e^{-1}\db A_\e$. Let $\e\to0$. We obtain $(a)$. 

To verify \re{db2=0}, let $\psi$ be any $k_0\times k_0$ matrix of smooth $(n,n-2)$ forms with compact support in $U$. Assume  $\db A\in L_{loc}^2(U)$. On each compact subset of $U$, the $L^2$ norms of $\db A_\e$ and $\db A_\e^{-1}=A_\e^{-1}(\db A_\e)A_\e^{-1}$ are uniformly bounded.  Applying Stokes' theorem and the Cauchy-Schwarz inequality to $\int \db A_\e\wedge \db A_\e^{-1}\wedge\psi$,  we get
$$
\int (\db A)A^{-1}\wedge\db\psi=\lim_{\e\to0}\int (\db A_\e)A_\e^{-1}\wedge\db\psi=-\int \db A\wedge\db A^{-1}\wedge\psi.\qedhere
$$
\end{proof}
\begin{lemma}\label{om''}
Let $\om^{(0,1)}$ be a $k_0\times k_0$ matrix of $(0,1)$ forms on an open set $U$ of $\cc^n$. 
Assume that $\om^{(0,1)}\in L_{loc}^2(U)$ and $\db \om^{(0,1)}=\om^{(0,1)}\wedge\om^{(0,1)}$ in the sense of distributions.
Let  $A$ be an invertible  $k_0\times k_0$  matrix of continuous functions   on $U$ with $\db A\in L_{loc}^2(U)$. Then 
$$
\hat\om^{(0,1)}:=(\db A )A^{-1}+A\om^{(0,1)} A^{-1}
$$
is still in $L_{loc}^2(U)$ and $\hat\Om^{(0,2)}=A\Om^{(0,2)}A^{-1}$ in the sense of distributions on $U$.
\end{lemma}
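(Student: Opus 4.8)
The plan is to reduce the assertion to its elementary smooth analogue by mollification; the only delicate point is the bookkeeping of convergence topologies, since a product of two $L^2_{loc}$ objects is merely $L^1_{loc}$ and the matrix factors $A,A^{-1}$ are only continuous. The regularity claim is immediate: as $A$ is continuous and invertible on $U$, both $A$ and $A^{-1}$ lie in $L^\infty_{loc}(U)$, while $\dbar A,\om^{(0,1)}\in L^2_{loc}(U)$; hence each of $(\dbar A)A^{-1}$ and $A\om^{(0,1)}A^{-1}$ is a product of one $L^2_{loc}$ factor with bounded factors, so $\hat\om^{(0,1)}\in L^2_{loc}(U)$. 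In particular $\hat\om^{(0,1)}\wedge\hat\om^{(0,1)}\in L^1_{loc}(U)$, so $\hat\Om^{(0,2)}:=\dbar\hat\om^{(0,1)}-\hat\om^{(0,1)}\wedge\hat\om^{(0,1)}$ is a well-defined $(0,2)$-form-valued distribution on $U$.

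Fix $K\Subset U$ and, with $\chi_\e$ the mollifier from the proof of \rla{dist}, set $A_\e=\chi_\e\ast A$ and $\om_\e=\chi_\e\ast\om^{(0,1)}$. For $\e$ small these are smooth on a neighborhood of $K$, $A_\e$ is invertible there, $A_\e\to A$ and $A_\e^{-1}\to A^{-1}$ in $L^\infty(K)$, and $\dbar A_\e\to\dbar A$, $\om_\e\to\om^{(0,1)}$ in $L^2(K)$. The key point is that the hypothesis $\dbar\om^{(0,1)}=\om^{(0,1)}\wedge\om^{(0,1)}$ makes $\dbar\om^{(0,1)}$ an $L^1_{loc}$ form and identifies it, so $\dbar\om_\e=\chi_\e\ast(\dbar\om^{(0,1)})=\chi_\e\ast(\om^{(0,1)}\wedge\om^{(0,1)})\to\om^{(0,1)}\wedge\om^{(0,1)}$ in $L^1(K)$. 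For the smooth invertible matrices $A_\e$ and the smooth $(0,1)$-form matrices $\om_\e$, the transformation law \re{hatomom} together with the type identity \re{om02}—formal identities of differential forms valid for any smooth invertible $A_\e$ and any smooth $\om_\e$—give, on the interior of $K$,
\[
\dbar\hat\om_\e-\hat\om_\e\wedge\hat\om_\e=A_\e\bigl(\dbar\om_\e-\om_\e\wedge\om_\e\bigr)A_\e^{-1},\qquad \hat\om_\e:=(\dbar A_\e)A_\e^{-1}+A_\e\om_\e A_\e^{-1}.
\]

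Now let $\e\to0$. From the convergences above, $\hat\om_\e\to\hat\om^{(0,1)}$ in $L^2(K)$, hence $\hat\om_\e\wedge\hat\om_\e\to\hat\om^{(0,1)}\wedge\hat\om^{(0,1)}$ in $L^1(K)$ (write the difference as $(\hat\om_\e-\hat\om^{(0,1)})\wedge\hat\om_\e+\hat\om^{(0,1)}\wedge(\hat\om_\e-\hat\om^{(0,1)})$ and use Cauchy--Schwarz with $\|\hat\om_\e\|_{L^2(K)}$ bounded) and $\dbar\hat\om_\e\to\dbar\hat\om^{(0,1)}$ in the sense of distributions on the interior of $K$. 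By the same estimate $\om_\e\wedge\om_\e\to\om^{(0,1)}\wedge\om^{(0,1)}$ in $L^1(K)$, so $\dbar\om_\e-\om_\e\wedge\om_\e\to0$ in $L^1(K)$, and multiplying by the uniformly convergent $A_\e,A_\e^{-1}$ gives $A_\e(\dbar\om_\e-\om_\e\wedge\om_\e)A_\e^{-1}\to0$ in $L^1(K)$. Passing to the limit in the displayed identity yields $\hat\Om^{(0,2)}=0$ on the interior of $K$; as $K$ is arbitrary, $\hat\Om^{(0,2)}=0$ on $U$, and since $\Om^{(0,2)}=\dbar\om^{(0,1)}-\om^{(0,1)}\wedge\om^{(0,1)}=0$ by hypothesis, this is precisely the asserted $\hat\Om^{(0,2)}=A\Om^{(0,2)}A^{-1}$.

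The crux is the interchange of $\dbar$ with the limit: one must know the curvature remainder $\dbar\om_\e-\om_\e\wedge\om_\e$ converges in $L^1$, not merely as a distribution, so that it survives multiplication by the only-continuous matrices $A_\e$, $A_\e^{-1}$; this is exactly where the formal integrability hypothesis enters, since it is what turns $\dbar\om^{(0,1)}$ into an $L^1_{loc}$ function with a known value. An alternative, more hands-on route avoids the smooth transformation law entirely: write $\dbar\hat\om^{(0,1)}=\dbar((\dbar A)A^{-1})+\dbar(A\om^{(0,1)}A^{-1})$, evaluate the first summand by \rla{dist}(b), obtain a distributional Leibniz rule for the second by the same mollification (again using $\dbar\om^{(0,1)}\in L^1_{loc}$ and $\dbar A^{-1}=-A^{-1}(\dbar A)A^{-1}$ from \rla{dist}(a)), and cancel against the expansion of $\hat\om^{(0,1)}\wedge\hat\om^{(0,1)}$ using $A^{-1}A=I$; this produces the same identity.
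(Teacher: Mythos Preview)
Your proof is correct. The paper takes what you sketch as the ``alternative, more hands-on route'': it expands $\hat\om\wedge\hat\om$ and $\dbar\hat\om$ term by term---invoking \rla{dist}$(b)$ for $\dbar((\dbar A)A^{-1})$ and an (implicitly mollification-justified) Leibniz rule for $\dbar(A\om A^{-1})$, with $\dbar A^{-1}:=-A^{-1}(\dbar A)A^{-1}$---and subtracts to obtain $\dbar\hat\om-\hat\om\wedge\hat\om=A(\dbar\om-\om\wedge\om)A^{-1}$ directly. Your primary argument instead mollifies $A$ and $\om$ simultaneously, invokes the smooth transformation law once, and passes to the limit; this is conceptually cleaner and makes transparent exactly where the formal integrability hypothesis is needed (to force $\dbar\om\in L^1_{loc}$ so the curvature remainder survives multiplication by the merely continuous matrices $A_\e,A_\e^{-1}$). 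The paper's route is shorter on the page because \rla{dist} has already absorbed half the mollification work, but your approach is self-contained and the bookkeeping of convergence topologies is handled carefully.
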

\begin{proof}Write $\om^{(0,1)},\hat\om^{(0,1)}$ as $\om,\hat\om$. By assumption,  $\om \wedge\om$ is in $L^1_{loc}(U)$.  Let $\db A^{-1}$ be defined  in \re{db2=0} and hence    
$
(\db A)A^{-1}+A\db A^{-1}=0.
$
We compute
\aln{}
\hat\om\wedge\hat\om&=((\db A)A^{-1}+A\om A^{-1})\wedge(-A\db A^{-1} +A\om A^{-1})\\
&=-(\db A)\wedge\db A^{-1}  +(\db A)\wedge \om A^{-1}-A\om \wedge\db A^{-1} +A\om \wedge\om A^{-1},\\
\db\hat\om& =-\db A\wedge\db A^{-1}+\db A\wedge\om A^{-1}+A(\db\om)A^{-1}-A\om\wedge\db  A^{-1}.
\end{align*}
Subtracting the above expressions yields  $\db\hat\om-\hat\om\wedge\hat\om  =A(\db\om-\om\wedge\om) A^{-1}$. 
\end{proof}
\begin{defn}A frame change via $A$ on $U$ is {\it admissible}, if $A, A^{-1}\in C^0(U)$ and $\db A\in L^2_{loc}(U)$.
\end{defn}
\begin{prop}\label{d2=0}
Let $D$ be a connection for a complex vector bundle $E$. Let $e=(e_1,\dots, e_{k_0})^t$ be a local frame field of $E$ over $U$. Assume that $De=\om \otimes e$ and $\om\in L^2_{loc}(U)$ and $d\om\in L^1_{loc}(U)$. If $A$ is admissible, then $\hat\om=(dA)A^{-1}+A\om A^{-1}$ is still in $L^2_{loc}(U)$ and  the curvature  forms    satisfy $A\Om A^{-1}=\hat\Om$. 
\end{prop}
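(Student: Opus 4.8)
The plan is to reduce \rp{d2=0} to the two lemmas just proved by mollification, treating first the untwisted curvature identity $\Om = d\om - \om\wedge\om$ and then the transformation law. First I would split the connection form and curvature into bidegree components. Since $D = D' + D''$ with $D''e = \om^{(0,1)}\otimes e$, and the $(0,2)$-part of the curvature is exactly $\db\om^{(0,1)} - \om^{(0,1)}\wedge\om^{(0,1)}$, \rl{om''} applies verbatim to give $\widehat\Om^{(0,2)} = A\Om^{(0,2)}A^{-1}$ in the sense of distributions, once we know $\om^{(0,1)}\in L^2_{loc}$ and $\db\om^{(0,1)}\in L^1_{loc}$ (both of which follow from the hypotheses $\om\in L^2_{loc}$, $d\om\in L^1_{loc}$ by taking bidegree components). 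The genuinely new content is therefore the full $\Om = d\om - \om\wedge\om$, i.e. the $(2,0)$ and $(1,1)$ parts, and the identity $\widehat\Om = A\Om A^{-1}$ for the full curvature.

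For the curvature identity itself I would argue directly from $R = D\circ D$ on $Lip_{loc}(M,E)$ and the Leibniz rule \re{Dfs}: for a local section $s = \sum f^i e_i$ with $f^i\in Lip_{loc}(U)$ one computes $Ds = df^i\otimes e_i + f^i\,\om^j_i\otimes e_j$, which lies in $L^\infty_{(1),loc}$, and then $D(Ds)$ requires applying $D$ to an $L^\infty$ $1$-form times a Lipschitz section; the product $f^i\om^j_i$ is only $L^\infty$, so $D$ of it is not literally defined by \re{Dfs}. This is the point where I would invoke mollification: replace $A$, $\om$ by $A_\e = \chi_\e * A$, $\om_\e = \chi_\e *\om$, which are smooth, compute everything classically where $\widehat{\om_\e} := (dA_\e)A_\e^{-1} + A_\e\om_\e A_\e^{-1}$ and $\Om_\e = d\om_\e - \om_\e\wedge\om_\e$ obey $\widehat\Om_\e = d\widehat{\om_\e} - \widehat{\om_\e}\wedge\widehat{\om_\e} = A_\e\Om_\e A_\e^{-1}$ as smooth forms, and then pass to the limit. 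The convergences needed are: $A_\e, A_\e^{-1}\to A, A^{-1}$ in $L^\infty_{loc}$ (as in \rl{dist}), $dA_\e\to dA$ in $L^2_{loc}$ (this is where admissibility, $\db A\in L^2_{loc}$, combined with $dA = \partial A + \db A$ and the a priori regularity of $\partial A$ coming from $\om$ via the relation that $e$ is a frame, is used — or more cleanly, one observes that the statement only needs to be tested against smooth compactly supported forms and uses the Cauchy–Schwarz trick from the proof of \rl{dist} on the quadratic terms $dA_\e\wedge dA_\e^{-1}$), $\om_\e\to\om$ in $L^2_{loc}$, and $d\om_\e\to d\om$ in $L^1_{loc}$. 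The quadratic terms $\om_\e\wedge\om_\e$, $\om_\e\wedge dA_\e^{-1}$, $dA_\e\wedge\om_\e A_\e^{-1}$ converge in $L^1_{loc}$ by Cauchy–Schwarz from the $L^2_{loc}$ convergences, exactly mirroring the computation in \rl{om''}.

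Concretely I would organize it as: (i) observe $\hat\om = (dA)A^{-1} + A\om A^{-1}\in L^2_{loc}$ since each factor is a product of an $L^2_{loc}$ term with $L^\infty_{loc}$ terms, using \rl{dist}(a) for $A^{-1}$; (ii) mollify and write the classical identity $A_\e\Om_\e A_\e^{-1} = d\widehat{\om_\e} - \widehat{\om_\e}\wedge\widehat{\om_\e}$; (iii) expand both sides, pair against an arbitrary smooth compactly supported $(n,n-2)$-type test matrix $\psi$ (so that $d$ becomes integration by parts), and pass $\e\to 0$ term by term using the convergences above and Cauchy–Schwarz on the bilinear terms, as in the proof of \rl{dist}; (iv) conclude $\widehat\Om = A\Om A^{-1}$ in the sense of distributions, and record that $d\hat\om\in L^1_{loc}$ as a byproduct since $\widehat\Om\in L^1_{loc}$ and $\hat\om\wedge\hat\om\in L^1_{loc}$. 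The main obstacle I anticipate is step (iii) — specifically making sure that \emph{every} term arising in the expansion of $d\widehat{\om_\e} - \widehat{\om_\e}\wedge\widehat{\om_\e}$ (there are cross terms like $(dA_\e)\wedge dA_\e^{-1}$, $(dA_\e)\wedge\om_\e A_\e^{-1}$, $A_\e\om_\e\wedge dA_\e^{-1}$, and the $A_\e\,d\om_\e\,A_\e^{-1}$ term) has a limit, which forces the hypothesis $d\om\in L^1_{loc}$ (not merely $\db\om$) and the admissibility of $A$; the $(2,0)$ and $(1,1)$ bidegrees require controlling $\partial A_\e$, and here one must be a little careful that the only genuinely distributional input is $dA$ and $d\om$, while all products are handled by Cauchy–Schwarz exactly as in \rl{dist} and \rl{om''}. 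Once the bookkeeping is set up this is essentially the same mollification argument run in full degree rather than just in bidegree $(0,\bigcdot)$.
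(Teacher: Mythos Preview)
Your overall strategy---mollify, compute the classical curvature transformation law, pass to the limit with Cauchy--Schwarz on the bilinear terms---is exactly what the paper does: it says that once $dA\in L^2_{loc}$, the proof of \rl{om''} goes through verbatim with $\db$ replaced by $d$. So the approach is the same.

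However, there is a genuine gap at the one step you flagged as delicate. You need $dA_\e$ uniformly bounded in $L^2_{loc}$ (equivalently, $dA\in L^2_{loc}$) in order to run Cauchy--Schwarz on the cross terms $dA_\e\wedge dA_\e^{-1}$, $dA_\e\wedge\om_\e A_\e^{-1}$, etc. Admissibility only gives you $\db A\in L^2_{loc}$, not $\partial A\in L^2_{loc}$. Your two proposed fixes do not work: (i) there is no ``a priori regularity of $\partial A$ coming from $\om$''---$A$ is an arbitrary admissible change of frame, not tied to $\om$ in any way; (ii) the ``Cauchy--Schwarz trick from \rl{dist}'' already presupposes the uniform $L^2$ bound on $dA_\e$ that you are trying to establish, so invoking it here is circular.

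The missing ingredient is an elliptic regularity fact for $\db$ on functions: if $A\in L^2_{loc}(U)$ (which follows from $A\in C^0$) and $\db A\in L^2_{loc}(U)$, then $dA\in L^2_{loc}(U)$. This is nontrivial---it is the statement that the overdetermined system $\partial_{\bar z_1},\dots,\partial_{\bar z_n}$ controls the full gradient---and the paper handles it by citing H\"ormander~\ci{MR1045639}*{Thm.~4.2.5}. Once you insert this one sentence, your argument is complete and coincides with the paper's.
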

\begin{proof}Since $A $ and  $\db A$ are in $ L^2_{loc}(U)$, then $dA\in L^2_{loc}(U)$; see~\ci{MR1045639}*{Thm.~4.2.5}. Thus \re{db2=0}  and the proof  of \rp{om''} are valid when $\db$ is replaced by $d$ and $dA^{-1}$ is defined as $-A^{-1}(dA)A^{-1}$.
\end{proof}

\begin{defn}\label{def-int} Let $D$ be an $L^\infty$ connection on a complex vector bundle $E$ of Lipschitz class over $\ov M$. 
We say that $(E,D)$ (or simply $D$)  is \emph{formally integrable}, if, for each $x\in  M$, there exists a Lipschitz frame   $e$    on a neighborhood $U$ of $x$ such that the curvature component  $\Om^{(0,2)}$ vanishes on $U$. We say that $D$ is \emph{integrable} if    for each point $x\in \ov M$ there exists an admissible frame change $\hat e=Ae$    in a neighborhood $V$ of $x$ in $\ov M$ for which $\hat\om^{(0,1)}=0$ on $V\cap M$.  
\end{defn}
\begin{rem}For an admissible frame change $\hat e=Ae$ starting from a  Lipschitz frame $e$, $\hat e$ may no long be of Lipschitz class with respect to the original Lipschitz bundle structure on $E$, unless the matrix $A$ itself is of Lipschitz class.
\end{rem}
%

%
%

We conclude the section with the following results  showing   that \rt{vbint-} is optimal, except the open problem mentioned in Section~1.
\begin{prop}If the rank of $E$ is one, \rta{vbint-} and \rta{vbint} below remain valid when $M$ has  {\it two}
 negative Levi eigenvalues at $\zeta_0\in \pd M$. 
\end{prop}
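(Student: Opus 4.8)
The plan is to exploit the fact that for a line bundle the nonlinear terms in \re{om02} and \re{dbAA} vanish identically, so the whole integrability problem linearizes to a single local $\db$-equation on $(0,1)$-forms, and such an equation is solvable with the sharp gain of $1/2$ derivative already when $\pd M$ has two negative Levi eigenvalues. Concretely, let $\rank E=1$ and let $e$ be a Lipschitz local frame of $E$ on a neighborhood $U_0$ of $\zeta_0$ in $\ov M$, with connection form $\om\in L^\infty_{(1)}(U_0)$. Since $\om^{(0,1)}$ is then a scalar $(0,1)$-form, $\om^{(0,1)}\wedge\om^{(0,1)}=0$, so the hypothesis $\Om^{(0,2)}=0$ reduces by \re{om02} to $\db\om^{(0,1)}=0$ on $U_0\cap M$ in the sense of distributions. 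We seek an admissible frame change $\hat e=Ae$ near $\zeta_0$ with $\hat\om^{(0,1)}=0$. As $A$ is scalar and $A,A^{-1}\in C^0$ with $\db A\in L^2_{loc}$, \rl{om''} and \rl{dist}$(a)$ give
$$
\hat\om^{(0,1)}=(\db A)A^{-1}+\om^{(0,1)}=\db\log A+\om^{(0,1)}.
$$
Hence, if $v$ is continuous with $\db v\in L^2_{loc}$ and $\db v=\om^{(0,1)}$ on $V\cap M$ for a simply connected neighborhood $V\ni\zeta_0$, then $A:=e^{-v}$ satisfies $\log A=-v$ there, so $\hat\om^{(0,1)}=-\db v+\om^{(0,1)}=0$ on $V\cap M$. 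Everything thus comes down to solving the \emph{linear} equation $\db v=\om^{(0,1)}$ near $\zeta_0$ with $v$ of class $C^{1/2}$ (resp. $\Lambda^{r+1/2}$), $\db$-closedness of $\om^{(0,1)}$ being the only compatibility condition.

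Next I would invoke the local homotopy formula. Two negative Levi eigenvalues at $\zeta_0$ are enough to solve $\db$ on $(0,1)$-forms with the sharp $1/2$-gain --- it is only in the higher-rank case, where the nonlinear iteration produces errors in bidegree $(0,2)$, that a third negative eigenvalue is needed. Accordingly, the homotopy formulas reformulated in Sections~3--4 together with the estimates of Section~5 provide, on a sufficiently small ball $V\ni\zeta_0$ with $\ov{V\cap M}\subset U_0$, a bounded operator $H$ for which $\db\om^{(0,1)}=0$ implies $\om^{(0,1)}=\db(H\om^{(0,1)})$ on $V\cap M$, with $H$ mapping $L^\infty$ (resp. $\Lambda^r$) $(0,1)$-forms into $C^{1/2}$ (resp. $\Lambda^{r+1/2}$). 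Put $v=H\om^{(0,1)}$ and $A=e^{-v}$. Then $A,A^{-1}=e^{\mp v}\in C^{1/2}(V\cap M)\subset C^0$, and $\db A=A(A^{-1}\db A)=-A\,\db v=-A\,\om^{(0,1)}\in L^\infty\subset L^2_{loc}$, so $A$ is an admissible frame change on $V$. By the identity above $\hat\om^{(0,1)}=0$ on $V\cap M$, and by \rl{om''} also $\hat\Om^{(0,2)}=A\Om^{(0,2)}A^{-1}=0$. Therefore $\hat e=Ae$ exhibits a holomorphic line bundle structure on $E|_V$ of class $C^{1/2}$ (resp. $\Lambda^{r+1/2}$) relative to the original $Lip$ (resp. $\Lambda^{r+1}$) structure of $E|_V$, which is exactly the conclusion of \rta{vbint-} and of \rta{vbint} below.

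The part needing care --- what passes for a ``main obstacle'' here --- is bookkeeping rather than hard analysis: one must know that the homotopy formula of Sections~3--5 is available for $(0,1)$-forms under the weaker hypothesis of two negative Levi eigenvalues (the classical fact that $q$ negative Levi eigenvalues annihilate local $\db$-cohomology in degrees $1,\dots,q-1$, here in its sharp H\"older--Zygmund form), and that $A=e^{-v}$ is a legitimate \emph{admissible} frame change transforming $\om^{(0,1)},\Om^{(0,2)}$ by \re{hatomA} even though $A$ is merely $C^{1/2}$ and not Lipschitz --- which is precisely what \rl{dist}$(a)$, \rl{om''} and \rp{d2=0} were designed to supply. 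There is genuinely no nonlinearity: no Picard or Newton iteration, and no $\db$-solvability in degree two, enters the argument.
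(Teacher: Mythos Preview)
Your proposal is correct and follows essentially the same approach as the paper: observe that in rank one the wedge term $\om^{(0,1)}\wedge\om^{(0,1)}$ vanishes, reducing formal integrability to $\db\om^{(0,1)}=0$; solve $\db v=\om^{(0,1)}$ via the $\db$-solution operator of \rta{cchf-closed}/\rta{concave-est} (which for $\db$-closed $(0,1)$-forms needs only a $(q+1)=2$-concave configuration); and set $A=e^{-v}$, using \rla{dist} to verify $\db A=-A\om^{(0,1)}$. The paper's proof is the same argument in two sentences (with $A=e^{u}$ and the opposite sign convention); your additional care in checking admissibility of $A$ via \rla{dist}, \rla{om''} is consistent with and slightly more explicit than the paper's version.
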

\begin{proof}When the rank of $E$ is 1, the connection form $\om$ is simply a $(0,1)$-form. In this case $\om\wedge\om=0$, and the  formal integrability is reduced to the condition $\db\om=0$ on $U$. By \rt{concave-est}, there exists a solution $u$ in some neighborhood $U$ of $\zeta_0$ such that $\db u=\om$ on $U\setminus M$. Moreover,  $u\in C^{1/2}(U)$ when $\om\in L^\infty$, and $u\in\Lambda^{r+1/2}$ when $\om\in\Lambda^r$. Set $A=e^u$. Then $A$ is continuous on $U$ and satisfies $A\neq0$. On $U\setminus \pd M$, we have $\db\log A=\om$ and $\db A=A\om$ by \rl{dist}. 
\end{proof}
The following example shows   that  in general one cannot  expect a holomorphic structure for $(E,D)$   that  is   of class $C^{r+\e}$ ($\e>1/2$) with respect to the original $C^{r+1}$ bundle structure equipped with  a connection, i.e. a connection form,  of class $C^r$. Let $B^n$ denote the unit ball in $\cc^n$.
\begin{prop}Let $n\geq2$ and $\zeta_0=(1,0')\in\pd B^n$. There exists a $\db$-closed $(0,1)$-form $\om$ on $B^n$  with $\om\in C^r(\ov{B^n})$  such that for any neighborhood $U$ of $\zeta_0 $ in $\ov{B^n}$, $A$ is not in $C^{r+\all}(U)$ for every $\all>1/2$, if $\db A=A\om$ holds in the sense of distributions on $U\setminus\pd B^n$ and $A$ is non-vanishing and continuous  on  $U$.
\end{prop}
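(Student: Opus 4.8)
The plan is to reduce the statement, via the logarithm, to the (classical) optimality of the gain $1/2$ for $\db$ at a strictly pseudoconvex boundary point, and then to take for $\om$ a form realizing that optimality.

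\emph{Reduction.} Let $U$ be a neighborhood of $\zeta_0$ in $\ov{B^n}$; after shrinking $U$ we may assume that $U\cap B^n$ is connected and simply connected, hence so is $\ov{U\cap B^n}$. If $A$ is continuous and nowhere zero on $U$ and $\db A=A\om$ on $U\setminus\pd B^n=U\cap B^n$ in the sense of distributions, then $A$ has a single-valued continuous logarithm $u:=\log A$ on $U$. Since $\om\in C^r\subset L^\infty_{loc}$ and $A$ is bounded, $\db A=A\om\in L^1_{loc}(U\cap B^n)$, so \rl{dist}$(a)$ gives $\db u=A^{-1}\db A=\om$ on $U\cap B^n$; conversely $A=e^{u}$. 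As $w\mapsto e^{w}$ and, on $A(U)\subset\cc\setminus\{0\}$, $w\mapsto\log w$ are real-analytic, composition preserves the local H\"older--Zygmund class, so $A\in C^{r+\all}(U)$ if and only if $u\in C^{r+\all}(U)$. Hence it suffices to produce a $\db$-closed $\om\in C^r(\ov{B^n})$ such that, for every neighborhood $U$ of $\zeta_0$ in $\ov{B^n}$ and every $\all>1/2$, the equation $\db u=\om$ on $U\cap B^n$ has no solution of class $C^{r+\all}(U)$. (By the solution estimates of the present paper such $\om$ still admits a $C^{r+1/2}$ solution near $\zeta_0$, so the statement is not vacuous; conversely, any $\om\in C^{r+\e}$ near $\zeta_0$ would by the same gain admit a $C^{r+\e+1/2}$ solution, so necessarily $\om\notin C^{r+\e}$ near $\zeta_0$.)

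\emph{Choice of $\om$.} Here I would take for $\om$ the standard example witnessing that the gain $1/2$ is optimal, with $\zeta_0$ as the distinguished point. Concretely, choose $p_j=\bigl(\sqrt{1-\e_j^{2}},\,\e_j,\,0,\dots,0\bigr)\in\pd B^n$, $\e_j=2^{-j}$, which tends to $\zeta_0$ along a complex-tangential direction with $|p_j-\zeta_0|\sim\e_j$; around $p_j$ take the Levi box $Q_j$ (size $\sim\e_j$ in the $n-1$ complex-tangential variables, size $\sim\e_j^{2}$ in the complex-normal variable $1-z_1$ and in the distance to $\pd B^n$), the $Q_j$ being pairwise disjoint. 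With $\Phi_j$ the affine anisotropic dilation carrying $Q_j$ onto a fixed unit box $Q_0$, set $\om=\sum_j\om_j$, where $\om_j$ is transported through $\Phi_j$ from a fixed ``unit-scale'' $\db$-closed model form $\mu_0$ on $Q_0$ and rescaled in amplitude by a constant $c_j$; here $\mu_0$ is chosen so that $\db v=\mu_0$ on $Q_0$ admits no solution more regular than $C^{r+1/2}$ even after subtracting holomorphic functions (the classical one-scale optimality), and the $c_j$ are calibrated so that $\sup_j\|\om_j\|_{C^{r}(\ov{B^n})}<\infty$ — so $\om\in C^{r}(\ov{B^n})$ since the supports are disjoint — while $\inf_j\|\om_j\|_{C^{r}}>0$. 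Then $\om$ is $\db$-closed, being a locally finite sum of exact forms.

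\emph{Lower bound and the main obstacle.} Suppose $u\in C^{r+\all}(U)$ with $\all>1/2$ solves $\db u=\om$ on $U\cap B^n$, and fix an integer $N>r+1$. The plan is to show that for infinitely many $j$ there are a point $x_j\in U$ and an increment $h_j$ with $|h_j|\sim\e_j$ such that the $N$-th difference satisfies $|\Delta^{N}_{h_j}u(x_j)|\gtrsim\e_j^{\,r+1/2}$; since $\e_j^{\,r+1/2}/\e_j^{\,r+1/2+\e}\to\infty$ for every $\e>0$, this forces $u\notin C^{\,r+1/2+\e}$ for all $\e>0$ — hence $u\notin C^{r+\all}$ for every $\all>1/2$ — while remaining consistent with the sharp $C^{r+1/2}$ bound. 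To obtain it, I would rescale at $p_j$ by $\Phi_j$: $u_j:=u\circ\Phi_j$ solves $\db u_j=\Phi_j^{*}\om$ on $Q_0$, with $\Phi_j^{*}\om=\Phi_j^{*}\om_j+R_j$ and $R_j$ coming from the tails $\sum_{k\neq j}\om_k$, negligible on $Q_0$; then the one-scale optimality of $\mu_0$, combined with the a priori $C^{r+1/2}$ estimate to absorb $R_j$ and to make $\{u_j\}$ precompact, yields a uniform lower bound for a suitable $N$-th difference of $u_j$ on $Q_0$, which undoes to the claimed estimate for $u$. I expect the delicate point to be precisely this one-scale optimality \emph{robust under holomorphic corrections}: a solution of $\db v=\mu_0$ is determined only up to a holomorphic summand, and — as the geometry of $\pd B^n$ at $\zeta_0$ shows, there being no nonconstant analytic disc in $\ov{B^n}$ through $\zeta_0$ other than those tangent to the complex-normal direction, along which $u$ becomes an unconstrained holomorphic function of one variable — the obstruction cannot be read off from any single analytic disc. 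It must instead exploit the genuinely anisotropic structure of the problem, i.e. the optimality of the $1/2$-subelliptic estimate for $\dbb$ on the sphere (equivalently on its Heisenberg model), where CR functions are automatically twice as regular in the complex-tangential directions, together with the rigidity that a single holomorphic $h$ cannot repair the lacunary family $\{\om_j\}$ at all but finitely many scales. Carrying out the rescaling bookkeeping uniformly in $U$ and as $\all\downarrow1/2$ is the technical core of the argument.
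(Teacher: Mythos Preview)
Your reduction via the logarithm is correct and is exactly what the paper does: if $A\in C^{r+\all}$ is nonvanishing and $\db A=A\om$, then $u=\log A\in C^{r+\all}$ solves $\db u=\om$, so it suffices to rule out $C^{r+\all}$ solutions of the linear equation.

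The gap is in your construction of $\om$. You propose a lacunary family $\sum_j\om_j$ built from a ``unit-scale model form $\mu_0$'' whose defining property is that $\db v=\mu_0$ admits no solution in $C^{r+1/2+\e}$ \emph{even after subtracting holomorphic functions}. But you never construct such a $\mu_0$; you simply assume it exists and defer the entire difficulty into it. The subsequent rescaling/compactness sketch therefore has nothing to act on, and the ``robustness under holomorphic corrections'' that you correctly flag as the delicate point is left unproved.

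The paper avoids all of this machinery by taking a single explicit form (Stein's example, adapted):
\[
\om(z)=\frac{(z_1-1)^r}{\log(z_1-1)}\,d\bar z_2,
\]
which is $\db$-closed, lies in $C^r(\ov{B^n})$, and has the explicit particular solution $v(z)=\frac{(z_1-1)^r\bar z_2}{\log(z_1-1)}$. Any solution $u$ of $\db u=\om$ then satisfies $u-v$ holomorphic, so $\partial_{z_1}^k u-Q(z_1)\bar z_2$ is holomorphic, where $Q=\partial_{z_1}^k\bigl[(z_1-1)^r/\log(z_1-1)\bigr]$. Integrating $\partial_{z_1}^k u(1-\e,z_2)-\partial_{z_1}^k u(1-2\e,z_2)$ against $dz_2$ over the circle $|z_2|=\sqrt{\e}$ kills the holomorphic correction by Cauchy's theorem and leaves $(Q(-\e)-Q(-2\e))\cdot 2\pi i\e$ on one side; comparing with the H\"older bound on the other side forces $|\log\e|^{-2}\lesssim\|u\|_{C^{r+\all}}\,\e^{\all-1/2}$, impossible for $\all>1/2$. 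The point is that with an explicit solution $v$ in hand, the ``holomorphic correction'' obstacle reduces to a two-line Cauchy computation, and no lacunary family or compactness argument is needed.
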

\begin{proof}We adapt Stein's classical example~\ci{MR0774049}*{p.~73} for the linear equation $\db u=\om$. Let $r\geq0$. Choose the branch $\arg\log (z_1-1)  \in[0,2\pi)$ and define
 $$
(z_1-1)^a:=e^{a\log(z_1-1)}, \quad a\in\rr;\qquad  \om(z):=\frac{(z_1-1)^r}{\log (z_1-1)}d\ov z_2.
 $$
Then $\om\in C^r(\ov{B^n})$, and $\db \om=0$;  hence $\db\om-\om\wedge\om=0$ on $B^n$.  

We first claim that if $\db u=\om$ then  $u$ cannot be in $ C^{r+\all}(\ov{B^n}\cap U)$ for any $\all>1/2$. Otherwise, replacing $\all$ by a smaller number   still greater than $1/2$, we may assume   $r+\all=k+\beta$, where $0<\beta<1$,  $k=[r]$  when $r-[r]<1/2$ or $k=[r]+1$  when $r-[r]\geq1/2$.    Then $u(z)-\frac{(z_1-1)^r\ov z_2}{\log(z_1-1)}$ is  holomorphic on $U\cap B^n$. Set  $u_k(z):=\pd^k u(z)/{\pd z_1^k}$ and $$
Q(z_1):=\frac{d^k}{dz_1^k}\f{(z_1-1)^{r}}{\log(z_1-1)}=(z_1-1)^{r-k}\sum_{j=1}^{k+1}\f{ C_j}{(\log(z_1-1))^{j}}, \quad C_1=\binom{r}{k}\neq0.
$$   
Then $u_k-Q(z_1)\bar z_2$ is holomorphic on $U\cap B^n$. By the Cauchy formula,
$$
\int_{|z_2|=\sqrt{|\e|}}(u_k(1-\e,z_2)-u_k(1-2\e,z_2))\, dz_2=( Q(-\e)-Q(-2\e))\int_{|z_2|=\sqrt\e}\bar z_2\, dz_2,
$$
 for $0<\e<1/4$. We estimate
$$
|C_1|\e\left|\frac{(-\e)^{r-k}}{\log(-\e)}-\f{ (-2\e)^{r-k}}{\log(-2\e)}\right|-\sum_{j=2}^{k+1}\f{C_j'|\e|^{r-k+1}}{|\log\e|^{j}}\leq \|u\|_{C{r+\all}}\e^{r+\all-k+1/2}.
$$
Treating separately the cases $r-k=0$ and $r-k>0$,  we obtain $\f{1}{2}|C_1||\log\e|^{-2}\leq \|u\|_{C^{r+\all}}\e^{\all-1/2}$, which is impossible for $\all>1/2$ and small $\e>0$.  This proves the claim.

Now suppose that $A\in C^{r+\all}(U)$  for some $\all>1/2$, $A$ does not vanish,  and $\db A=A\om$ on $U\subset B^n$. By \rl{dist},  we obtain  $\db (\log A)=A^{-1}\db A=\om$ in the sense of distributions, where $\log A\in C^{r+\all}(U)$ after shrinking $U$ if necessary. This contradicts the  claim. 
 \end{proof}
Finally, we give an example showing that \rt{vbint-} and \rt{vbint} fail when $\pd M$ has only one negative Levi eigenvalue.
\begin{prop}Let $H\subset\cc^2$ be the Heisenberg group defined by  $y_2=|z_1|^2$, and let $H^-=\{y_2<|z_1|^2\}$. There exists a smooth function $f(z_1,x_2)$ such that for every $\zeta_0\in H$ and every open subset $U\subset \ov{ H^{-}}$ containing $\zeta_0$,  the $(0,1)$ form $f(z_1,x_2)d\ov z_1$ on $H$ extends to a $\db$-closed smooth $(0,1)$ form $\om$ on $U$, for which the equation $\db A=A\om$
admits {\it no}  $C^1$ solution on $H\cap U_0$  with $A(\zeta_0)\neq0$, where $U_0$ is any neighborhood of $\zeta_0$ in $\ov {H^-}$.  
\end{prop}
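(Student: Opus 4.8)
The plan is to restrict the equation $\db A=A\om$ to $H$, recognize the restricted equation as the Hans Lewy unsolvable equation, and obtain the form $\om$ by a $\db$-closed extension to the concave side $H^-$; that extension is the only substantial point.

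After a Heisenberg translation take $\zeta_0=0$. The $(0,1)$ CR field of $H$, $\ov L:=\pd_{\ov z_1}-2iz_1\pd_{\ov z_2}$, satisfies $\ov L(|z_1|^2-\IM z_2)\equiv0$, so it is tangent to $H$ and to every level set $\{|z_1|^2-\IM z_2=s\}$; in the coordinates $(z_1,x_2)$ on $H$ it acts as $L:=\pd_{\ov z_1}-iz_1\pd_{x_2}$, which up to a real-linear change of variables and complex conjugation is the Hans Lewy operator. By Lewy's example (and Hörmander's refinement) we may therefore fix $f\in C^\infty(\rr)$ for which $Lw=f(x_2)$ has no $C^1$ solution on any nonempty open subset of $H$, and set $f(z_1,x_2):=f(x_2)$.

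For the extension, put $s:=|z_1|^2-\IM z_2$, so $H=\{s=0\}$ and $H^-=\{s>0\}$, and let $\om_0:=g_1\,d\ov z_1$ with $g_1:=\sum_{k\ge0}\frac{(-is)^k}{k!}f^{(k)}(x_2)\,\chi(\la_ks)$, a Borel resummation of the formal Taylor expansion $f(x_2-is)$ (divergent, since $f$ need not be real-analytic); here $\chi\in C_0^\infty(\rr)$ equals $1$ near $0$ and $\la_k\uparrow\infty$ is chosen fast. Then $g_1|_H=f$, so $\om_0$ pulls back to $f\,d\ov z_1$ on $H$; and because $x_2-is=z_2-iz_1\ov z_1$ is killed by $\pd_{\ov z_2}$, the formal series is formally $\db$-closed, so for fast $\la_k$ the form $\db\om_0=(\pd_{\ov z_2}g_1)\,d\ov z_2\wedge d\ov z_1$ vanishes to infinite order on $H$. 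Now extend $\db\om_0$ by $0$ across $H$ to a $\db$-closed $C^\infty$ $(0,2)$-form $\tilde\beta$, solve $\db\psi=\tilde\beta$ near $0$; since $\psi$ is $\db$-closed on a small strictly pseudoconvex $\Om$ with $\Om\subset H^+$, $0\in\pd\Om$, and $\pd\Om$ coinciding with $H$ near $0$, the $\db$-equation on $\Om$ gives $h\in C^\infty(\ov\Om)$ with $\db h=\psi$ there; subtracting $\db$ of a smooth extension of $h|_{\ov\Om}$ from $\psi$ yields $\psi'\in C^\infty$, flat on $H$, with $\db\psi'=\db\om_0$ on $H^-$. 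Then $\om:=\om_0-\psi'$ is $C^\infty$ near $0$ in $\ov{H^-}$, is $\db$-closed on $H^-$, and pulls back to $f\,d\ov z_1$ on $H$; after shrinking it lies inside the given $U$. Writing $\om=\hat g_1\,d\ov z_1+\hat g_2\,d\ov z_2$, the pullback identity forces $\hat g_1|_H=f$ and $\hat g_2|_H=0$.

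Finally, suppose $A$ is $C^1$ near $\zeta_0$ in $\ov{H^-}$, $A(\zeta_0)\ne0$, and $\db A=A\om$ on $H^-$. Then $A\ne0$ near $\zeta_0$, a $C^1$ branch $v:=\log A$ exists, and \rl{dist} gives $\db v=A^{-1}\db A=\om$ on $H^-$; passing to $C^1$ boundary values, $v_{\ov z_1}|_H=\hat g_1|_H=f$ and $v_{\ov z_2}|_H=\hat g_2|_H=0$. As $\ov L$ is tangent to $H$, $w:=v|_H$ is $C^1$ near $\zeta_0$ in $H$ and $Lw=(\ov Lv)|_H=(v_{\ov z_1}-2iz_1v_{\ov z_2})|_H=f(x_2)$ on a nonempty open subset of $H$, contradicting the choice of $f$; since only $\hat g_1|_H=f$, $\hat g_2|_H=0$ and $\db\om=0$ on $H^-$ were used, this holds for every $\db$-closed extension $\om$ of $f\,d\ov z_1$ and every $U_0$. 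The main obstacle is the extension step — the naive extension $f(x_2-is)$ diverges precisely because $f$ is non-analytic, and must be replaced by Borel resummation combined with the global regularity of $\db$ on the convex face $H^+$. The obstruction is genuinely one-sided: on $H^+$, which is strongly pseudoconvex and hence covered by \rt{vbint-}, the same computation shows no $\db$-closed extension of $f\,d\ov z_1$ exists at all — any $\db$-closed $(0,1)$-form smooth up to $H$ is $\db$-exact up to $H$, forcing its pullback to $H$ to be $\db_b$-exact — so \rt{vbint-} is not contradicted.
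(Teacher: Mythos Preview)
Your proof is correct and follows the same overall strategy as the paper: invoke Lewy's unsolvable equation on $H$, extend $f\,d\bar z_1$ to a $\bar\partial$-closed form on the concave side, then reduce $\bar\partial A = A\omega$ to $L(\log A) = f$ on $H$.

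The only substantive difference is the extension step. The paper disposes of it by a one-line citation of Andreotti--Hill~\ci{MR0460725}. You instead carry it out by hand: a Borel resummation of the (divergent) formal Taylor series $f(x_2-is)$ produces $\omega_0$ with $\bar\partial\omega_0$ flat on $H$, and then a $\bar\partial$-correction exploiting strict pseudoconvexity of the \emph{other} side $H^+$ kills the residual error while preserving flatness on $H$. This is essentially a special case of the Andreotti--Hill argument, made explicit. Your version is self-contained and makes visible \emph{why} the extension exists only on the concave side (your closing remark), but it forces you to take $f=f(x_2)$ so that the formal series has a clean one-variable shape, and it silently invokes $C^\infty$ regularity of $\bar\partial$ up to the boundary on a smooth strictly pseudoconvex $\Omega$. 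The paper's black-box citation covers the general $f(z_1,x_2)$ without these auxiliary ingredients. In the final reduction you use both $\hat g_1|_H=f$ and $\hat g_2|_H=0$, but note that only the combination $\hat g_1-2iz_1\hat g_2=f$ on $H$ (i.e.\ the tangential $(0,1)$-part) is actually needed, which is all the word ``extends'' in the statement guarantees.
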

\begin{proof}Let $L=\partial/{\partial\bar z_1}-z_1\partial/{\partial x_2}$. By  Lewy's classical non-solvability theorem, there exists    a smooth function $f$ on $H$ such that the equation $Lu=f$ has no continuous solution on any non-empty open subset of $H$.  Andreotti and Hill~\ci{MR0460725}*{\S 5} showed that for every $\zeta_0\in H$,  the $(0,1)$ form  $f(z_1,x_2)\, d\ov z_1$ on $H$ extends to a $\db$-closed $(0,1)$ form $\om$ on some neighborhood $U^-$ of $\zeta_0$ in $H^-$.  Suppose, for the sake of contradiction, that there exists a $C^1$ solution $A$  of $\db A=A\om$ on some neighborhood $U_0\subset H^-$ of $\zeta_0$, with $A\neq0$. Then  $A^{-1}\db A=\om$, so setting $u=\log A$ we obtain    $\db_b u=f(z_1,x_2)\, d\ov z_1$ on $U'\cap H$, where $\db_b$ is the tangential Cauchy-Riemann operator on $H$. But $\db_b u=fd\ov z_1$ is equivalent to  $Lu=f$,  a contradiction.
\end{proof}

\setcounter{thm}{0}\setcounter{equation}{0}

\section{Local homotopy formula for $(n-q)$ convex domains}\label{sect:convex}
 
In this section, we recall a local homotopy formula near a  strictly $(n-q)$ convex boundary point, as constructed in~\cites{MR986248,MR4866351,gong-shi-nn}. We  derive the formula on an original domain $D^{12}_{r}$ where the forms are defined. 
Our notation follows that of~\cites{MR4866351,MR986248}.
Throughout paper, we use $E^{1\dots\ell}=E^1\cap\cdots\cap E^\ell$ for sets $E^1,\dots, E^\ell$. 
\begin{defn}\label{pck}  
 Let $D^1,\dots, D^\ell$ be open sets in $\cc^n$.
We say that  $D^{1\dots\ell}$ is a 
  {\it $($real$)$ transversal intersection} of $D^1,\dots, D^\ell$, if  $\ov{D^{1\dots\ell}}$ is compact, there exist $C^1$  real-valued functions $\rho^1,\dots,\rho^\ell$ on a neighborhood $U$ of  $\ov{D^{1\dots\ell}}$ such that $D^j= U\cap\{\rho^j<0\}$, and for every $1\leq j_1<\cdots<j_i\leq\ell$,
$$
d\rho^{j_1}(\zeta)\wedge\cdots\wedge d\rho^{j_i}(\zeta) \neq0, \quad\text{whenever $\rho^{j_1}(\zeta)=\cdots=\rho^{j_i}(\zeta)=0$}.
$$
\end{defn}
Let $S^j:=\{\rho^j=0\}\cap \pd D^{1\dots\ell}$.  Each  $S^j$ carries a unique orientation such  that the Stokes' formula takes the form
\eq{orientations}
\int_{D^{1\cdots\ell}}df=\int_{S^1}f+\cdots+\int_{S^\ell}f.
\eeq

\begin{defn} Let $D$ be a domain in $\cc^n$, and let $S\subset \cc^n\setminus D$ be a $C^1$ submanifold in $\cc^n$. A mapping $g\colon D\times S\to \cc^n$ is called a {\it Leray map}   if $g\in C^1(D\times S)$ and
$$
g( z,\zeta )\cdot(\zeta-z)\neq 0, \quad \forall (z,\zeta)\in D\times S.
$$
\end{defn}

We always use the standard Leray mapping $g^0(z,\zeta)=\ov z-\ov\zeta$.
Let $g^j \colon D\times S^j\to\cc^n$ be   $C^1$ Leray   mappings for $j=1,\dots,\ell$.
Let $w=\zeta-z$ and define   \gan
\omega^{i}(z,\zeta)=\f{1}{2\pi i}\f{g^i(z,\zeta)\cdot dw}{g^i(z,\zeta)\cdot w},
\quad
\Omega^{i}=\omega^i\wedge(\ov\pd\omega^i)^{n-1},\\
\Omega^{i_1\cdots i_k}=\omega^{i_1}\wedge\cdots\wedge\omega^{i_k}\wedge\sum_{\alpha_1+\cdots+\all_k=n-k}
(\ov\pd\omega^{i_1})^{\alpha_1}\wedge\cdots\wedge(\ov\pd\omega^{i_k})^{\all_k}.
\end{gather*}
Decompose $\Om^{\bigcdot}=\sum\Om_{(0,q)}^{\bigcdot}$,
where $ \Om_{(0,q)}^{\bigcdot}(z,\zeta)$
  has type $(0,q)$  in the $z$-variable; thus $\Om^{i_1,\dots, i_\ell}_{(0,q)}$ has type $(n,n-\ell-q)$    in $\zeta$. For convenience, set
   $\Om^{\bigcdot}_{0,-1}=0$ and $\Omega_{0,n+1}^{\bigcdot}=0$. The Koppelman lemma says that
   $$
   \db_z\Om^{i_1\cdots i_k}_{(0,q-1)}+\db_\zeta\Om^{i_1\cdots i_k}_{(0,q)}=\sum(-1)^{j}\Om_{(0,q)}^{i_1\cdots\hat {i_j}\cdots i_k}.
   $$
See Chen-Shaw~\cite{MR1800297}*{p.~263} for a proof. For instance, we have 
\ga
\label{kop1}\db_\zeta\Om_{(0,q)}^{i_1}+\db_z\Om_{(0,q-1)}^{i_1}=0, 
\\
\label{kop2}\db_\zeta\Om^{i_1i_2}_{(0,q)}+\db_z\Om_{(0,q-1)}^{i_1i_2}=-\Om_{(0,q)}^{i_2}+\Om_{(0,q)}^{i_1}, 
\\
\label{kop3}\db_\zeta\Om^{i_1i_2i_3}_{(0,q)}+\db_z\Om_{(0,q-1)}^{i_1i_2i_3}=
-\Om_{(0,q)}^{i_2i_3}+\Om_{(0,q)}^{i_1i_3}-\Om^{i_1i_2}_{(0,q)}.
\end{gather}

For a   function $u$ on a submanifold $M$ in $\cc^n$, define
$$
  \int_{y\in M} u(x,y)dy^J\wedge dx^I =  \left \{\int_{y\in M}u (x,y)dy^J\right\}dx^I.
$$
Then the exterior differential $d_x$ satisfies
\eq{checksign}
d_x\int_{y\in M}\phi(x,y) =(-1)^{\dim M}\int_{y\in M}d_x\phi(x,y).
\eeq

 Let $D\subset  U_0$ be defined by $\rho^0<0$ with $\rho^0\in C^2(U_0)$.
Suppose that the Levi-form of $\rho^0$ has   $(n-q)$ positive eigenvalues at $\zeta_0\in\pd D$.
Then there exist an open set $U_1\Subset U_0$ containing $\zeta_0$ and  a  biholomorphic mapping $\psi$ defined on a neighborhood of $\ov{U_1}$  such that $\psi(\zeta_0)=0$, $U:=\psi(U_1)$ is a polydisc, and  $D^1:=\psi(U_1\cap D)$ is defined by
\ga{}\label{qconv-nf}
D^1=\{z\in U\colon\rho^1(z)<0\},\\
\rho^1(z)=-y_{n}+\la_1|z_1|^2+\cdots+\la_{q-1}|z_{q-1}|^2+|z_{q}|^2+
\cdots+|z_{n}|^2+R(z),
\end{gather}
where $|\la_j|<1/4$ and $R(z)=o(|z|^2)$.  Furthermore, there exists $r_1>0$  such that the boundary  $\pd D^1$ intersects the sphere $\pd B_r$ transversally when $0<r<r_1$. See~\ci{MR4866351} for details.
Let
\eq{rho2}
\rho^2(z)=|z|^2-r^2_2,\quad D_{r_2}^2=\{z\colon\rho^2<0\},
\end{equation}
 where $0<r_2<r_1$ and $r_1/2<r_2<r_1$.  Define
\ga{}\label{d12} D_{r_2}^{12}=D^1\cap D_{r_2}^2,\quad
\pd D^{12}=S^1\cup S^2, \quad S^i\subset\pd D^i.
\end{gather}
Here and in what follows, we may drop the subscript in $D^2_{r_2},D^{12}_{r_2}$. We also define
\ga{}
\label{g1}
g^{1}_j( z,\zeta )=\begin{cases}
\DD{\rho^1}{\zeta_j},&q\leq j\leq n,\\
\DD{\rho^1}{\zeta_j}+(\ov\zeta_j-\ov z_j),& 1\leq j<q,\end{cases}
\\
\label{g02}
 g^2( z,\zeta )=\Bigl(\DD{\rho^2}{\zeta_1},\dots, \DD{\rho^2}{\zeta_n}\Bigr)=\ov\zeta.
\end{gather}
Then for $\zeta,z\in U$ and by shrinking $U$ if necessary, we have
\al{}\label{W1-dist}
2\RE\{ g^1( z,\zeta )\cdot(\zeta-z)\}&\geq \rho^1(\zeta)-\rho^1(z)+\f{1}{2}|\zeta-z|^2. 
\end{align}
Note that $g^2(z,\zeta)$ is holomorphic in $z$, while  $g^1(z,\zeta)$ is anti-holomorphic in only the first $q-1$ variables of $z$.  Thus, we have
\al{}
\label{type-1}
& \Omega_{(0,k)}^{1}( z,\zeta )=0, \  k\geq q;\quad \db_z\Om_{(0,q-1)}^1( z,\zeta )=0;
\\
\label{type-2}
 &\Omega_{(0,k)}^{2}( z,\zeta )=0, \  k\geq1;\quad \db_z\Om^2_{(0,0)}(z,\zeta)=0;\\
\label{type-12}
&\Omega_{(0,k)}^{12}( z,\zeta )=0,\  k\geq q;\quad \db_z\Om_{(0,q-1)}^{12}( z,\zeta )=0.
\end{align}

We now introduce the following integrals   on open sets in $\cc^n$: 
\ga \label{defnLR}
R_{D; q}^{i_1\dots i_\ell}f(z):=\int_{\zeta\in D}\Om^{i_1\dots i_\ell}_{(0,q)}(z,\zeta)\wedge f(\zeta).
\end{gather}
For convenience, we  also write this as $R_{D; q}^{i_1\dots i_\ell}f=\int_{ D}\Om^{i_1\dots i_\ell}_{(0,q)} \wedge f$. Similarly, we define integrals on low dimensional sets
\ga{}\label{defnLR+}
L_{i_1\cdots i_\mu; q}^{j_1\dots j_\nu}f:=\int_{S^{i_1\cdots i_\mu}}\Omega_{(0,q)}^{j_1\cdots j_\nu} \wedge f,
\end{gather}
where $S^{i_1\dots i_\mu}$ is the transversal intersection of $C^1$ real hypersurfaces $S^{i_1}, \dots,  S^{i_\mu}$.
\begin{thm}[\cites{MR986248}]\label{cchf00} Let $ 0<q\leq n$.
Let $(D^1,D_{r_2}^2)$, defined by \rea{qconv-nf}-\rea{rho2},  be a $(n-q)$-convex configuration with   Leray maps $g^1,g^2$ defined by \rea{g1}-\rea{g02}.  Let $f\in
C^0_{(0,q)}(\ov{D_{r_2}^{12}})
$ with $\db f\in C^0(\ov{D_{r_2}^{12}})$. Then
$
f=\db H^{(0)}_qf+  H^{(0)}_{q+1}\db f  
$
  on $D^{12}_{r_2}$, where  $H^{(0)}_s=R_{D_{r_2}^{12},s-1}^{(0)}-L^{01}_{1,s-1}-L^{02}_{2,s-1}+L^{012}_{12,s-1}$.
\eth 

To construct the homotopy formulas, we extend forms on $D^{12}_r$ to a larger domain.
We  use   the Rychkov extension operator $\cL E_{D}$ to extend functions on  a bounded Lipschitz domain $D\subset\cc^n$ to functions with compact support in $\cc^n$. The  operator $\cL E_{D}$ is applied componentwise for the coefficients of differential forms. We  also use the commutator $[\cL E_D,\db]f=\cL E_D(\db f)-\db(\cL E_Df)$ for a form $f$. Key estimates on $\cL E|_D$ will be stated in Section~\ref{h-space}.

Define 
$$
S^1_+=\pd D^2\setminus D^1, \quad U^1=D^2\setminus \ov{D^1}.
$$
 Thus, $D^{12} \cup S^1\cup U^1=D^2$. Note that $\pd U^1=S^1\cup S_+^1$. Define
$$
L_{1^+; q}^{01}f=\int_{S^1_+}\Omega_{(0,q)}^{01}\wedge\cL E_{D^{12}_{r_2}} f.
$$

We have   the following  homotopy formula in~\cites{MR4866351,gong-shi-nn}.
\begin{thm}\label{hf-c} Let $0<q\leq n$. Let $D^1,D^2_{r_2},g^1,g^2$ be as in \rta{cchf00}.
Let $U^1=D_{r_2}^2\setminus\ov {D^1}$ and  $S_+^1=\pd D_{r_2}^2\setminus D^1$.
 Suppose that $  f$ is a $(0,q)$ form such that $f$ and $\db f$ are in $C^\e (\ov {D_{r_2}^{12}})$ with $\e>0$. Then 
$
 f= \db  H_q f+  H_{q+1}\db f
$
  on  $D_{r_2}^{12}$, where    $H_s:=H_s^{(1)}+H_s^{(2)}$  and
\al{} 
\label{hq1} H^{(1)}_s &:=R_{ D_{r_2}^{2}; s-1}^0 \cL E_{D_{r_2}^{12}} +R_{U^1;s-1 }^{01}[\db,\cL E_{D_{r_2}^{12}}],\\
\label{nhq2-}
H^{(2)}_s&:=L_{1^+;{s-1}
}^{01} \cL E_{D_{r_2}^{12}}   -L_{2;s-1}^{02} +L_{12;s-1}^{012}.
\end{align}
\end{thm}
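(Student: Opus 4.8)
The plan is to derive \rt{hf-c} from the original homotopy formula \rt{cchf00} by a sequence of elementary manipulations: splitting the domain of integration, rewriting boundary integrals over the ``cap'' $S^1_+$ using Stokes/Koppelman, and inserting the Rychkov extension $\cL E_{D_{r_2}^{12}}$ in place of the abstract extension implicit in \rt{cchf00}. First I would apply \rt{cchf00} to the extended form $\tilde f:=\cL E_{D_{r_2}^{12}}f$ on the larger domain — but note that \rt{cchf00} is stated for forms on $\ov{D_{r_2}^{12}}$, so the cleaner route is to keep $f$ as is, use \rt{cchf00} directly, and then massage the four operators $R^{(0)}_{D^{12}_{r_2}}$, $L^{01}_1$, $L^{02}_2$, $L^{012}_{12}$ into the form $H^{(1)}_s+H^{(2)}_s$. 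The key identity is that, since $D^{12}\cup S^1\cup U^1=D^2$, one has for a $\db$-closed-up-to-order kernel
\[
R^0_{D^2_{r_2};s-1}\cL E_{D^{12}_{r_2}}f = R^0_{D^{12}_{r_2};s-1}f + R^0_{U^1;s-1}\cL E_{D^{12}_{r_2}}f,
\]
and the last term, via Stokes' theorem on $U^1$ together with the Koppelman formula \re{kop1}-\re{kop2} and $\pd U^1=S^1\cup S^1_+$, produces boundary contributions on $S^1$ and $S^1_+$ plus a commutator term $R^{01}_{U^1;s-1}[\db,\cL E_{D^{12}_{r_2}}]$ (the commutator appears precisely because $\cL E_{D^{12}_{r_2}}f$ is not $\db$-closed even when $f$ is).

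Second, I would track the boundary terms. The term on $S^1$ coming from the rewriting must cancel against $L^{01}_{1;s-1}f$ appearing in $H^{(0)}_s$ from \rt{cchf00}, using that $\cL E_{D^{12}_{r_2}}f=f$ on $\ov{D^{12}_{r_2}}\supset S^1$; the leftover boundary integral on $S^1_+$ is exactly $L^{01}_{1^+;s-1}\cL E_{D^{12}_{r_2}}f$ as defined just before the theorem. The remaining $L^{02}_{2;s-1}$ and $L^{012}_{12;s-1}$ terms from \rt{cchf00} pass through unchanged into $H^{(2)}_s$. Assembling: $R^{(0)}_{D^{12}}$ is absorbed into $R^0_{D^2_{r_2}}\cL E$, the $U^1$ piece splits off the commutator operator $R^{01}_{U^1}[\db,\cL E]$ and the $S^1,S^1_+$ boundary terms, the $S^1$ term kills $-L^{01}_1$, and what survives is precisely $H^{(1)}_s+H^{(2)}_s$ applied to $f$ and $\db f$ as claimed. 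One must also verify the regularity hypothesis is not lost: $f,\db f\in C^\e(\ov{D^{12}_{r_2}})$ guarantees $\cL E_{D^{12}_{r_2}}f$ and $[\db,\cL E_{D^{12}_{r_2}}]f$ lie in the function classes for which the Koppelman/Stokes manipulations and the integral operators $R^{01}_{U^1}$, $L^{01}_{1^+}$ are defined — this is where the estimates on $\cL E|_D$ promised for Section~\ref{h-space} enter, at least qualitatively (boundedness on $C^\e$).

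The main obstacle I anticipate is bookkeeping of signs and orientations: the convention \re{checksign} for differentiating under the integral sign over a manifold of given dimension, the orientation convention \re{orientations} for $S^1,S^2$, and the alternating signs in the Koppelman identities \re{kop1}-\re{kop3} all have to be combined consistently so that the $S^1$ boundary contribution cancels $-L^{01}_{1;s-1}$ exactly (no stray factor of $(-1)$). A secondary subtlety is justifying Stokes' theorem on $U^1=D^2_{r_2}\setminus\ov{D^1}$: since $\pd D^1$ meets $\pd B_{r_2}$ transversally (guaranteed by the choice $r_1/2<r_2<r_1$ after \re{qconv-nf}), $U^1$ is a Lipschitz domain and $S^1_+=\pd D^2_{r_2}\setminus D^1$ is the relevant piece of its boundary, so Stokes applies to the $C^1$ forms $\Omega^{01}_{(0,s-1)}\wedge\cL E_{D^{12}_{r_2}}f$ once the kernel singularity at $\zeta=z$ is handled in the usual way by excising a small ball and passing to the limit (legitimate because the kernels are integrable of the stated orders). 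Everything else — linearity, the decomposition $D=D'+D''$, and the fact that the formula is an identity of operators once checked on $f$ and $\db f$ separately — is routine.
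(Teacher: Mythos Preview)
Your proposal is correct and matches the approach the paper relies on. The paper does not give a self-contained proof of \rt{hf-c}; it cites \cites{MR4866351,gong-shi-nn}, and the analogous identity for the concave case (\rl{atfc}) is proved by exactly the Stokes/Koppelman manipulation you describe, referring again to \ci{MR4866351}*{Prop.~2.5}. So your derivation---start from \rt{cchf00}, split $R^0_{D^{12}}$ via $D^2=D^{12}\cup S^1\cup U^1$, apply Koppelman \re{kop2} and Stokes on $U^1$ to generate the commutator $R^{01}_{U^1}[\db,\cL E]$ plus boundary terms on $S^1$ and $S^1_+$, cancel the $S^1$ piece against $-L^{01}_{1;s-1}$, and pass $-L^{02}_{2}+L^{012}_{12}$ through unchanged---is the intended argument.

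One small simplification: your ``secondary subtlety'' about excising a ball around the kernel singularity on $U^1$ is unnecessary. For $z\in D^{12}_{r_2}$ one has $z\notin\ov{U^1}$, and by \re{W1-dist} the Leray denominator $g^1(z,\zeta)\cdot(\zeta-z)$ is bounded away from zero for $\zeta\in\ov{U^1}$; hence $\Om^{0},\Om^{1},\Om^{01}$ are all smooth on $D^{12}\times\ov{U^1}$ and Stokes applies directly without any limiting procedure. The genuine regularity issue is the one you already flagged: $\cL E_{D^{12}}f$ is only $\Lambda^\e$, so Stokes on $U^1$ and the definition of $[\db,\cL E]f$ require the distributional interpretation (or an approximation argument), which is where the $C^\e$ hypothesis and the boundedness of the Rychkov extension on $\Lambda^\e$ enter.
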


\setcounter{thm}{0}\setcounter{equation}{0}
\section{Local  
 homotopy formula for concave domains}\label{sect:concave}
 In this section, we recall a local homotopy formula for a strictly $(q+2)$ concave boundary point (i.e. the point at which the Levi-form has at least $q+2$ negative eigenvalues), as constructed in~\cites{MR986248,MR4866351,gong-shi-nn}.   We  also formulate the formula on shrinking domain $D^{123}_{(1-\theta)r}$, starting  from the original domain $D^{123}_{r}$ where forms are defined. 

Assume that $\pd D$ is strictly $(q+1)$ concave  at $\zeta_0\in\pd D$. Then there exist an open set $U_1\Subset U_0$ containing $\zeta_0$ and  a  biholomorphic mapping $\psi$ defined on a neighborhood of $\ov{U_1}$  such that $\psi(\zeta_0)=0$, $U:=\psi(U_1)$ is a polydisc, and $D^1:=\psi(U_1\cap D)$ is defined by  
\begin{gather}\label{rho1-v}
D^1=\{z\in U\colon\rho^1(z)<0\}, \\
\rho^1(z)=-y_{q+2}-\sum_{j=1}^{q+2}|z_j|^2 +\sum_{j=q+3}^n\la_{j}|z_{j}|^2 +R(z),
\end{gather}
where $|\la_j|<1/4$, 
 and $R(z)=o(|z|^2)$.  There exists $r_1>0$  such that the boundary  $\pd D^1$ intersects the sphere $\pd B_r$ transversally when $0<r<r_1$.

When $\rho^1$ has the form \re{rho1-v}, as in~\cite{MR986248}*{pp.~118-120} define
\eq{}\label{HL2pg81}
g^1_{j}( z,\zeta )=\begin{cases}
\DD{\rho^1}{z_j},&1\leq j\leq q+2,\\
\DD{\rho^1}{z_j}+\ov z_j-\ov \zeta_j,& q+3\leq j\leq n.
\end{cases}
\end{equation}
Then we have
\eq{W1s-dist}
-2\RE\{g^1( z,\zeta )\cdot(\zeta-z)\}\geq \rho^1(\zeta)-\rho^1(z)+|\zeta-z|^2/C.
\end{equation}
Note that $g^1(z,\zeta)$ is holomorphic in $\zeta_1,\dots, \zeta_{q+2}$.
We still use $\rho^2,D^2_{r_2}$, and $g^2$,  defined by  \re{rho2} and \re{g02}.
 Unlike the $(n-q)$ convex case,   a third domain is required:
\eq{defD3}
D^3\colon \rho^3<0, \quad 0\in D^3
\end{equation}
where $
\rho^3(z):=-y_{q+2}+\sum_{j=q+3}^n3|z_j|^2-r^2_3$
with $0<r_3<r_2/{C_n}$. Define
\ga
\label{defW-3}
g^{3}_j( z,\zeta )=\begin{cases}
0,&1\leq j<q+2,\\
i,&j= q+2,\\
3(\ov\zeta_j+\ov z_j),& q+3\leq j\leq n.
\end{cases}
\end{gather}
We can verify
\eq{HH}
\RE\{g^3(z,\zeta)\cdot(\zeta-z)\}=\rho^3(\zeta)-\rho^3(z).
\end{equation}
Denote by $\text{deg}_\zeta$  the degree of a form in $\zeta$. We have
\ga\label{type-3}
\text{deg}_\zeta\, \Om^3_{(0,\ell)}( z,\zeta )\leq n+([n-(q+3)+1]-\ell)= 2n-q-\ell-2, \quad\forall\ell.
\end{gather}

Then $D^{123}$ is a $C^1$ transversal intersection of $D^1, D^2, D^3$.
 We call  $(D^1, D^2,D^3)$, defined by \re{rho1-v}, \re{rho2}, and \re{defD3},
 a {\it $(q+1)$-concave configuration}.  
  The $g^1,g^2,g^3$ in \rea{HL2pg81}, \rea{g02} and \rea{defW-3}  are called the {\it  Leray maps} of the configuration.
  When we need to indicate the dependence of $D^2,D^3$ on  $r_2,r_3$, we will write
\eq{Diri}
D^i_{r_i}=D^i, \quad D^{23}_{r}=D_{r_2}\cap D_{r_3}^3, \quad D^{123}_r=D^1\cap D^2_{r_2}\cap D^3_{r_3}.
\eeq

As  in  Stokes' formula \re{orientations}, we   define the following oriented boundaries:
\ga{} 
\pd D^{123}=S^1+S^2+S^3,\quad S^i=\{\rho^i=0\}\cap\ov{D^{123}},\\
\pd S^1=S^{13}, \quad \pd S^2=S^{23},\quad \pd S^3= S^{32}+S^{31}.
\end{gather}
With the above orientations,  we have
$$
\pd(D^3\setminus \ov{D^1})=S_+^1-S^1,\quad S^{ij}=-S^{ji}.
$$

 Note that   \re{type-2} still holds for $\Om^2$ and 
\ga
\label{Ca-type-1}
\text{deg}_\zeta\,  \Omega_{(0,*)}^{1}(z,\zeta)\leq 2n-q-2.
\end{gather}
Therefore, we   have for a $(0,q)$ form $f$,
\ga{}\label{L110}L_{i;q}^if =\int_{S^i}\Om_{(0,q)}^i \wedge f =0, \quad  i=1,2.
\end{gather}
Note that
\ga{}
\label{db13=0}
\Om^{13}_{(0,\ell)}( z,\zeta )=0,\quad\ell<q;\quad \db_\zeta\Om^{13}_{(0,q)}( z,\zeta )=0.
\end{gather}


We recall the following result.
\begin{lemma}[\cite{MR986248}*{p.~122, Lem. 13.6 $(iii)$}]\label{lemm:4.1} Let $1\leq q\leq n-2$. Let $(D^1,D_{r_2}^2,D_{r_3}^3)$ be   a  $(q+1)$ concave configuration with Leray maps $(g^1,g^2,g^3)$. Let $U^1=D_{r_3}^3\setminus\ov {D^1}$ and  $S_+^1=\pd D_{r_3}^3\setminus D^1$. 
\bpp\item
Suppose that $f \in C^1_{(0,q)}(\ov{ D_{r}^{123}})$ is $\db$-closed   on $D^{123}_r$. Then on   $D_{r}^{123}$,
\begin{align}
\label{dL2312}L_{13;q}^{13}f&=0.
\end{align}
\item If $(D^1,D_{r_2}^2,D_{r_3}^3)$ is a $(q+2)$-concave configuration and $0<q\leq n-3$, then \rea{dL2312} is 
      valid on   $D_{r}^{123}$
for any   $f\in C_{(0,q)}^1(\ov{ D_{r}^{123}})$.
\epp
\ele

Note that  $\rho^2$ and $\rho^3$ are convex functions. Let $\hat g^0=g^0, \hat g^2=g^2$ and  $\hat g^3(z,\zeta)=\frac{\pd}{\pd\zeta}\rho^3$. 
Let $\hat\Om^{i}=\Omega^{\hat g^i}$ for $i=0,2,3$ and define $\hat\Om^{ij},\hat\Om^{ijk}$ analogously.   Then    the following  Bochner-Martinelli-Leray-Koppelman formula holds:  on $D_r^{23}$,
we have
\eq{BMLK}
g= \db\hat T_{D_{r}^{23}, \ell }g+\hat T_{D^{23}_{r},\ell+1}\db g,\quad 1\leq\ell\leq n,
\eeq
for $g\in C^1_{0,\ell}(\ov{D^{23}_{r}})$, where $\hat T_{D_r^{23},n+1}=0$ and $\hat T_{D_{r}^{23}, \ell }= R_{D_{r}^{23}, \ell -1 }-\hat L^{02}_{2, \ell -1}-\hat L^{03}_{3, \ell -1}+\hat L^{023}_{23, \ell -1}$ with
$$
\hat L^{0i}_{i,\ell }g:=\int_{S^{i}}\hat\Om^{0i}_{0,\ell} \wedge g,   \qquad \hat L^{023}_{23,\ell }g:=\int_{S^{23}}
\hat\Om^{023}_{0,\ell}\wedge g.
$$

The following is essential in \cite{MR986248}*{Lem.~13.7, p.~125}.
\begin{lemma} Let $1\leq q\leq n-2$. Let $(D^1,D^2,D^3)$ be  a  $(q+1)$-concave configuration.
 Let $L_{23,\bigcdot}^{23}$ be defined by \rea{defnLR}. 
For  $f\in C_{(0,q)}^1(S^{23})$, we have 
\ga{}\label{dbL23}
\db L^{23}_{23;q}f=L^{23}_{23;q+1}\db f\quad \text{on $ D_r^{23}$},\\
L^{23}_{23;q}f=\db \hat T_{ D_{(1-\theta)r}^{23};q}L^{23}_{23;q}f+\hat T_{D^{23}_{(1-\theta)r}, q+1}L_{23;q+1}^{23}\db f \quad \text{on $ D_{(1-\theta)r}^{23}$}.\label{L2323}
\end{gather}
\ele
\begin{proof}The form $\Om^{23}(z,\zeta)$ has no singularity in
 $z\in  D^{23}$ and $\zeta\in S^{23}\subset\pd D^{123}$.
 We have
$
\db_\zeta\Om_{(0,q+1)}^{23}+\db_z\Om_{(0,q)}^{23}=\Om_{(0,q+1)}^2-\Om_{(0,q+1)}^3.
$
By \re{type-2}, $\Om_{(0,q+1)}^2=0$. Thus $\int_{S^{23}}\Omega_{(0,q+1)}^2 \wedge f =0$.
  By  \re{type-3}, the $\zeta$-degree   of $f(\zeta)\wedge \Om^3_{(0,q+1)}( z,\zeta )$ is less than $ 2n-3$, which is less than $\dim (S^2\cap S^3)$.
This shows 
$
\int_{S^{23}}\Om_{(0,q+1)}^3 \wedge f =0,$ for $ q>0.
$
By Stokes' formula and $\pd S^{23}=\emptyset$, we obtain
$$
\db L^{23}_{23;q}f(z)=-\int_{S^{23}}\db_\zeta\Om_{(0,q+1)}^{23}( z,\zeta )\wedge f( \zeta )=\int_{S^{23}} \Om_{(0,q+1)}^{23}( z,\zeta )\wedge\db f(\zeta ).
$$
Finally, \re{L2323} follows from \re{BMLK}-\re{dbL23}. 
\end{proof}

 Recall that the $D^{123}_{(1-\theta)r}$ and $ D^{23}_{(1-\theta)r}$ below are defined in \re{Diri}. 
\begin{thm} \label{cchf0} Let $1\leq q\leq n-3$. Let $(D^1,D_{r_2}^2 ,D_{r_3}^3 )$ be a  $(q+2)$-concave configuration defined by \rea{rho1-v}, \rea{rho2} and \rea{defD3}, for which the
 Leray mappings are defined by \rea{HL2pg81}, \rea{g02}, and \rea{defW-3}.     Let $f\in
C^0_{(0,q)}(\ov{D_{r}^{123}})
$ satisfy $\db f\in C^0(\ov{D_r^{123}})$. Then
$
f=\db H^{(0)}_qf+  H^{(0)}_{q+1}\db f  
$  
holds on $D^{123}_{(1-\theta)r}$.  Here  $H^{(0)}_s=R_{D_r^{123},s-1}^{(0)}-L^{01}_{1,s-1}+\tilde L^{0123}_{s-1}$ with
\ga{} 
\tilde L^{0123}_{s-1}:=\sum_{i=1}^2L_{i3,{s-1}}^{0i3} +L_{12;{s-1}}^{123}-\hat T_{ D^{23}_{(1-\theta)r}, {s}}L^{23}_{23;{s}}.
\label{Hs2f}
\end{gather}
\eth 
\begin{rem}See~\cite{MR986248}*{Theorem 13.10 (i)} for a homotopy formula without shrinking domains, when $f,\db f$ are continuous. Our version will be useful for \rt{Linf}.
\end{rem}
\begin{proof} We begin with the estimate
  \ga{}\label{g3theta}
\RE\{g^3(z,\zeta)\cdot(\zeta-z)\}=\rho^3(\zeta)-\rho^3(z)
\geq r_3^2-(1-\theta)^2r_3^2\geq\theta r_3^2
\end{gather}  for $z\in D^{23}_{(1-\theta)r}$ and $\zeta\in S^{13}\cup S^{23}\cup S^{12}\subset\pd D^{123}$. Similar estimate holds for $\RE\{g^2(z,\zeta)\cdot(\zeta-z)\}$.  Together with the estimate \re{W1s-dist} for $g^1(z,\zeta)\cdot(z-\zeta)$, this implies that the integral kernels appearing in \re{Hs2f} are continuous in $z\in D^{123}_{(1-\theta)r}$.

 Recall the Bochner--Martinelli--Koppelman formula~\cite{MR1800297}*{p.~273} and a version for domains with piecewise $C^1$ boundary~\cite{MR986248}*{Def.~3.1, p.~46; Thm.~3.12, p.~53}:
 \aln 
  f(z)&=\db_z\int_{D^{123}}\Om_{(0,q-1)}^0(z,\zeta)\wedge f(\zeta)+\int_{D^{123}}\Om_{(0,q)}^0(z,\zeta)
 \wedge\db f(\zeta)\\
 \nonumber &\quad +\left\{\int_{S^1}+\int_{S^2}+\int_{S^3}\right\}\Om_{(0,q)}^0(z,\zeta)\wedge f(\zeta),
 \end{align*}
where the orientations are chosen so that  $\pd D^{123}=S^1+S^2+S^3$.
Let us transform the three boundary integrals via   Koppelman's lemma. In what follows, we take $i=1,2$.
By  \re{kop2}, \re{Ca-type-1}  and the fact that $\Om_{(0,q)}^2=0$, we obtain  
\aln
\int_{S^i}\Om_{(0,q)}^0(z,\zeta)\wedge f(\zeta)&= \int_{S^i }\left(\db_\zeta\Om_{(0,q)}^{0i}(z,\zeta)+
\db_z\Omega^{0i}_{(0,q-1)}(z,\zeta)\right)\wedge f(\zeta)
\\
& =\int_{S^{i3}}\Om_{(0,q)}^{0i}(z,\zeta)\wedge  f(\zeta) -\int_{S^i}\Om_{(0,q)}^{0i}(z,\zeta)\wedge\db_\zeta f(\zeta)\\
&\qquad -
\db_z\int_{S^i}\Om_{(0,q-1)}^{0i}(z,\zeta)\wedge f(\zeta),
\end{align*}
where the third last identity is obtained by Stokes' theorem for $S^i$ with $\pd S^i=S^{i3}$.
Similarly, using $\pd S^3=S^{32}+S^{31}$, we obtain
\aln
\int_{S^3}
\Om_{(0,q)}^0(z,\zeta)\wedge f(\zeta)&= \int_{S^3 }\left(\db_\zeta\Om_{(0,q)}^{03}(z,\zeta)+
\db_z\Omega^{03}_{(0,q-1)}(z,\zeta)\right)\wedge f(\zeta)
\\
& =\int_{S^{32}+S^{31}}\Om_{(0,q)}^{03}(z,\zeta)\wedge  f(\zeta) -\int_{S^3}\Om_{(0,q)}^{03}(z,\zeta)\wedge\db_\zeta f(\zeta)\\
&\quad -
\db_z\int_{S^3}\Om_{(0,q-1)}^{03}(z,\zeta)\wedge f(\zeta).
\end{align*} 
Using $S^{i3}=-S^{3i}$, we get 
\aln{}&\left\{\int_{S^1 }+\int_{S^2 }+\int_{S^3 }\right\}\Om_{(0,q)}^0(z,\zeta)\wedge f(\zeta)\\
&\quad \quad \quad
=\sum_{i=1}^2\left\{\int_{S^{i3}}(\Om_{(0,q)}^{0i}-\Omega_{(0,q)}^{03})(z,\zeta)\wedge  f(\zeta) - \int_{S^i }\Om_{(0,q)}^{0i}(z,\zeta)\wedge\db_\zeta f(\zeta)\right.\\
&\quad\quad\qquad -
\left. \db_z\int_{S^i }\Om_{(0,q-1)}^{0i}(z,\zeta)\wedge f(\zeta)\right\}.
\end{align*}
Using $\Omega^{0i}-\Omega^{03}+\Omega^{i3}=\db_z\Omega^{0i3}+\db_\zeta\Omega^{0i3}$, we get
\aln{}
\int_{S^{i3}}(\Om_{(0,q)}^{0i}-\Omega_{(0,q)}^{03})(z,\zeta)\wedge  f(\zeta)&=\int_{S^{i3}}(\db_\zeta \Om_{(0,q)}^{0i3}+\db_z\Omega_{(0,q-1)}^{0i3})(z,\zeta)\wedge  f(\zeta)\\
&\quad-\int_{S^{i3}}\Om_{(0,q)}^{i3}(z,\zeta)\wedge  f(\zeta).
\end{align*}
The last term is zero for $i=1$ by \rl{lemm:4.1}. For $i=2$, by \re{L2323} we write 
$$
L^{23}_{23;q}f =\db\hat T_{ D_{(1-\theta)r}^{23};q}L^{23}_{23;q}f+\hat T_{  D^{23}_{(1-\theta)r}, q+1}L_{23;q+1}^{23}\db  f
$$
on $ D^{23}_{(1-\theta)r}$. By \re{checksign}, we have $\int_{S^{i3}}\db_z\Omega_{(0,q-1)}^{0i3}(z,\zeta)\wedge  f(\zeta)= \db_z\int_{S^{i3}}\Omega^{0i3}(z,\zeta)\wedge  f(\zeta)$.  
 Since $S^{i3}$ has no boundary, then 
$\int_{S^{i3}}\db_\zeta \Om_{(0,q)}^{0i3}(z,\zeta)\wedge f(\zeta) =  
\int_{S^{i3}} \Om_{(0,q-1)}^{0i3}(z,\zeta)\wedge\db_\zeta f(\zeta).
$
This shows that on $D^{123}_{(1-\theta)r}$
 \aln 
  f &=\db\int_{D^{123}}\Om_{(0,q-1)}^0 \wedge f +\int_{D^{123}}\Om_{(0,q)}^0 
 \wedge\db f  -\db\int_{S^1 }\Om_{(0,q-1)}^{01} \wedge f -\int_{S^1 }\Om_{(0,q)}^{01} \wedge \db  f \\ 
& -\db \hat T_{D_{(1-\theta)r}^{23};q}L^{23}_{23;q}f-\hat T_{ D^{23}_{(1-\theta)r}, q+1}L_{23;q+1}^{23}\db  f+\sum_{i=1}^2(\db L_{i3,q-1}^{0i3}f+L_{i3,q}^{0i3}\db f).\qedhere
 \end{align*}
 \end{proof}

We now use the extension operator. In the following we take $\cL E=\cL E_{D^{123}}$. Define 
$$
L_{1^+,\ell}^{01} f=\int_{S^1_+}\Omega^{01}_{(0,\ell)}\wedge\cL E f.$$

 \le{atfc}Assume that $f\in C^{1+\e}_{(0,q)}(\ov{D^{123}})$ with $\e>0$. On $D^{123}$, we have 
 \aln{}
\db \int_{D^{123}}\Om_{(0,q-1)}^0 \wedge f &-\db\int_{S^1}\Om_{(0,q-1)}^{01} \wedge f+\int_{D^{123}}\Om_{(0,q)}^0 
 \wedge\db f\\
 \nonumber
&  -\int_{S^1}\Om_{(0,q)}^{01} \wedge \db  f =\db \tilde H_q^{(1)}f+\tilde H_{q+1}^{(1)}\db f
\nonumber
 \end{align*}
 with $\tilde H_s^{(1)}=
  R_{  D^{23},s-1  }
^{0}\cL  E + R_{D^3\setminus D^1,s-1}
^{01}[\db,\cL  E] -  L^{01}_{1^+,s-1}\cL E.
 $  
 \ele
\begin{proof}This is  in the proof of ~\ci{MR4866351}*{Prop.~2.5}, replacing the Stein extension by the Rychkov extension.  
\end{proof}

Recall that $U^1=D_{r_3}^3\setminus\ov {D^1}$. Thus, $D^{123}_r\cup U^1=D^{23}_{r}$.  By \rt{cchf0} and 
  \rl{atfc}, we have the following homotopy formula.
\th{cchf} Keep assumptions in \rta{cchf0}.
    Let $f\in
C^{1+\e}_{(0,q)}(\ov{D_{r}^{123}})
$ with $\e>0$. Then
 \ga{}\label{tsqf+-cv}
f=\db H_qf+  H_{q+1}\db f  
\end{gather}
 holds on $D^{123}_{(1-\theta)r}$
for $1/2<\theta<1$. Here for $s=q,q+1$, $H_s=H_s^{(1)}+H_s^{(2)}$ and
\gan{}
H_s^{(1)}=R^0_{D_r^{23},s}\cL E_{D^{123}}+R^{01}_{D_{r_3}^3\setminus D^1}[\db,\cL E|_{D^{123}}],\quad
H_{s}^{(2)}:=\tilde L^{0123}_{s-1}- L^{01}_{1^+,s-1}\cL E_{D^{123}}.
\end{gather*}
\eth 
%
 
Laurent-Thi\'ebaut and Leiterer~\cite{MR1621967}*{Prop.~0.7}  proved that no local homotopy formula with good estimates exists  near  $(q+1)$ concave boundary for $(0,q)$ forms.  Nevertheless,  the following result holds for $\db$-closed $(0,q)$-forms.
\th{cchf-closed} Let $1\leq q\leq n-2$. Let $(D^1,D_{r_2}^2 ,D_{r_3}^3 )$ be a  $(q+1)$-concave configuration.
Keep other conditions in \rta{cchf}.
    Assume that $f\in
C^{1+\e}_{(0,q)}(\ov{D_{r}^{123}})
$ with $\e>0$ is $\db$-closed.  On $D^{123}_{(1-\theta)r}$,  we have
$
f=\db H_qf$ for  $H_q$ in \rta{cchf}.
\eth

\setcounter{equation}{0}

\section{Exact $1/2$ gain estimates on shrinking domains}\label{h-space} 
In this section, we define the H\"older-Zygmund spaces and derive the exact $1/2$ gain estimates on shrinking domains.   

By
 a {\it bounded Lipschitz  domain} $D\subset\rr^n$, we mean that  there exist a finite open covering $\{U_i\}_{i=1}^N$ of $\ov D$, together with rigid affine transformations $A_i$, and positive numbers $\del_0,\del_1$, and $L$, such that for each $i$, the set   $A_i(\ov D\cap\ov{ U_i})$ is  defined by $\del_1\geq x_n\geq R_i(x')$ and  $x'\in[0,\del_0]^{n-1}$, where  $|R_i(\tilde x')-R_i(x')|\leq L|\tilde x'-x'|$. We say that a constant $C(D)$, depending on $D$, is {\it stable} under small perturbations of the Lipschitz domain $D$, if $C(\tilde D)$ can be chosen independent of $\tilde D$ if $A_i(\ov{\tilde D}\cap \ov{U_i})$ is defined by $\del_1\geq x_n\geq\tilde R_i(x')$, where  $|\tilde R_i(x')-R_i(x')|<\e$ and $|\tilde R_i(\tilde x')-\tilde R_i(x')|\leq (L+\e)|\tilde x'-x'|$ for some $\e>0$.
Similarly,  a constant $C(\nabla\rho,\nabla^2\rho)$ is  stable under small $C^2$ perturbations of $\rho^1$ if the corresponding constant $C(\nabla\tilde \rho,\nabla^2\tilde\rho)$ depends only on $\rho$, provided $\|\tilde\rho-\rho\|_{C^2}<\delta$ for some positive $\delta$ depending only on $\rho$. 

Let $\nn=\{0,1,\dots,\}$.
For $r\in[0,\infty)$, denote by $C^r(\ov D)$   the space of H\"older functions on $\ov D$ with the standard $C^r$-norm $\|\cdot\|_{C^r(D)}$.  
When $r=k+\beta$ with $k\in\nn$ and $0<\beta\leq1$, define $\Lambda^{r}(D)$  to be the set of functions $f$ on $D$ with finite norm
$$
\| f\|_{\Lambda^{r}(D)}:=\|f\|_{C^{k}(D)} +\sup_{
h\neq0,x\in D, x\pm h \in D} \f{|   f(x+h)+f(x-h)-2f(x)|}{|h|^\beta}.
$$
The reader is refer to \ci{MR4866351} for various equivalent H\"older-Zygmund norms. 

\le{abcd} Let $D\subset \rr^n$ be a bounded Lipschitz domain.  Set $|\cdot|_a=\|\cdot\|_{\Lambda^a(D)}$ and $\|\cdot\|_a=\|\cdot\|_{C^a(\ov D)}$. Then for functions $u,v$ on $D$,  
\aln{}
\|uv\|_a&\leq C_a(\|u\|_a\|v\|_0+\|u\|_0\|v\|_a),\quad  a\geq0,  
 \\ 
|uv|_a&\leq C_a(|u|_a\|v\|_0+\|u\|_0|v|_a),\quad    \quad a>0.
\end{align*}
The $C_a$ is stable under small perturbations of 
 the Lipschitz domain $D$.
\ele

Let $D\subset \cc^n$ be a bounded Lipschitz  domain. Then the Rychkov   extension $\cL E=\cL E_D\colon \Lambda^a(D)\to \Lambda^a_0(\cc^n)$ satisfies 
$$
  \|\cL Ef\|_{\Lambda^a(\cc^n)}\leq C_a\|f\|_{\Lambda^a( D)},\quad a>0,
$$
where $C_a$ is stable under small perturbations of 
 the Lipschitz domain $D$. 
 
 \begin{thm}[convex configuration estimate]\label{conv-est} Let $r>0$ and $1\leq q\leq n-1$.
Let $(D^1,D_{r_2}^2)$ be an $(n-q)$-convex configuration. Let $D^{12}_r=D^1\cap D^2_r$.
The homotopy operators $H_q, H_{q+1}$ in \rta{hf-c} satisfies
\aln{}
\|H_s\var\|_{\Lambda^{r+1/2}(D^{1 2}_{(1-\theta)r_2})}&\leq \f{C_r( \nabla\rho^1,\nabla^2\rho^1)}{\theta^{3r+2r_0}}\|\var\|_{\Lambda^r(D^{1 2}_{r_2})},\quad \forall\theta\in(0,1). 
\end{align*}
Moreover,  $C_r(\nabla\rho^1,\nabla^2\rho^1)$ is stable under small $C^2$ perturbations of  $\rho^1$.
\end{thm}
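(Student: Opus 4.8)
The plan is to write $H_s=H_s^{(1)}+H_s^{(2)}$ as in \rt{hf-c} and estimate the two groups of terms separately. The tools are: the Rychkov extension bound $\|\cL E_{D^{12}_{r_2}}\var\|_{\Lambda^r(\cc^n)}\le C_r\|\var\|_{\Lambda^r(D^{12}_{r_2})}$ together with the companion bound for the commutator $[\db,\cL E_{D^{12}_{r_2}}]$, both recalled in Section~\ref{h-space} with constants stable under small perturbations of the Lipschitz domain $D^{12}_{r_2}$; the multiplicativity of $\Lambda^a$-norms from \rl{abcd}; and the fixed-domain $\Lambda^{r+1/2}$ estimates for the Henkin--Romanov type operators $R^0_{D^2_{r_2}}$ over the ball $D^2_{r_2}$ and $R^{01}_{U^1}$ over $U^1=D^2_{r_2}\setminus\ov{D^1}$, as established in \ci{MR986248} and \ci{gong-shi-nn}, whose constants depend only on $C^2$ bounds of $\rho^1$ and are stable under small $C^2$ perturbations of $\rho^1$. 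The point to keep in mind is that $D^{12}_{r_2}$ is itself an $(n-q)$-convex configuration and the operators of \rt{hf-c} are defined on the un-shrunk $D^{12}_{r_2}$; the parameter $\theta$ will enter only through the restriction of $H_s\var$ to the subdomain $D^{12}_{(1-\theta)r_2}$ and through the distance from that subdomain to the artificial sphere $\pd D^2_{r_2}$.

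For $H_s^{(1)}=R^0_{D^2_{r_2}}\cL E_{D^{12}_{r_2}}+R^{01}_{U^1}[\db,\cL E_{D^{12}_{r_2}}]$ I would first bound $\cL E_{D^{12}_{r_2}}\var$ and $[\db,\cL E_{D^{12}_{r_2}}]\var$ in the relevant norms over $\cc^n$ by $C_r\|\var\|_{\Lambda^r(D^{12}_{r_2})}$ with $C_r$ stable, and then apply the recalled fixed-domain $\Lambda^{r+1/2}$ estimates for $R^0$ and $R^{01}$, which yields $\|H_s^{(1)}\var\|_{\Lambda^{r+1/2}(D^{12}_{r_2})}\le C_r(\nabla\rho^1,\nabla^2\rho^1)\|\var\|_{\Lambda^r(D^{12}_{r_2})}$. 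Since passing to the subdomain $D^{12}_{(1-\theta)r_2}$ only decreases the norm and $0<\theta<1$, this is dominated by $C_r(\nabla\rho^1,\nabla^2\rho^1)\,\theta^{-(3r+2r_0)}\|\var\|_{\Lambda^r(D^{12}_{r_2})}$, which is the asserted form for this part.

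For $H_s^{(2)}=L_{1^+}^{01}\cL E_{D^{12}_{r_2}}-L_2^{02}+L_{12}^{012}$, the integration sets $S^1_+$, $S^2$, $S^{12}$ all lie on the artificial sphere $\pd D^2_{r_2}=\{|\zeta|=r_2\}$, and here I would exploit the gap created by shrinking. For $z\in D^{12}_{(1-\theta)r_2}$ one has $|z|<(1-\theta)r_2$, hence $|\zeta-z|\ge|\zeta|-|z|>\theta r_2$ on these sets, while $\RE\{g^2(z,\zeta)\cdot(\zeta-z)\}=\RE\{\ov\zeta\cdot(\zeta-z)\}\ge|\zeta|(|\zeta|-|z|)\ge\theta r_2^2$, and on $S^{12}\subset\pd D^1$, $\RE\{g^1(z,\zeta)\cdot(\zeta-z)\}\ge\yh\rho^1(\zeta)-\yh\rho^1(z)+\yf|\zeta-z|^2\ge\yf\theta^2r_2^2$ by \rea{W1-dist}. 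Thus every Leray denominator occurring in the kernels $\Om^{01}_{(0,s-1)}$, $\Om^{02}_{(0,s-1)}$, $\Om^{012}_{(0,s-1)}$ is bounded below by $c\,\theta^2r_1^2$, so these kernels together with their $z$-derivatives up to the order required by the $\Lambda^{r+1/2}$-norm are bounded on $D^{12}_{(1-\theta)r_2}$, uniformly in $\zeta$, by $C_r(\nabla\rho^1,\nabla^2\rho^1)\,\theta^{-N}$ with $N\le 3r+2r_0$ (each extra $z$-derivative costs at most one further denominator factor $\lesssim\theta^{-2}$). Integrating over the compact manifolds $S^1_+$, $S^2$, $S^{12}$ and using $\|\cL E_{D^{12}_{r_2}}\var\|_{C^0(\cc^n)}\le C\|\var\|_{\Lambda^r(D^{12}_{r_2})}$ then shows these operators send $\Lambda^r(D^{12}_{r_2})$ into $\Lambda^{r+1/2}(D^{12}_{(1-\theta)r_2})$ with norm at most $C_r(\nabla\rho^1,\nabla^2\rho^1)\,\theta^{-(3r+2r_0)}$.

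Adding the two contributions gives the claimed estimate. For the stability statement, I would observe that the normal form \rea{qconv-nf}, the radius $r_1$ and any admissible choice $r_2\in(r_1/2,r_1)$, the Leray maps \rea{g1}--\rea{g02}, and the inequality \rea{W1-dist} all have data depending only on $C^2$ bounds of $\rho^1$, while the Rychkov-extension, commutator, and fixed-domain homotopy-operator constants are stable by Section~\ref{h-space} and the cited references; hence $C_r(\nabla\rho^1,\nabla^2\rho^1)$ is stable under small $C^2$ perturbations of $\rho^1$. I expect the only genuinely technical point to be the bookkeeping of the third paragraph: tracking how many $z$-derivatives the $\Lambda^{r+1/2}$-seminorm forces, multiplying by the per-derivative loss $\theta^{-2}$, and combining with the fixed $\theta^{-O(n)}$ size of the kernel, so that the total is absorbed into a single factor $\theta^{-(3r+2r_0)}$ for the constant $r_0$ from the references; everything else is a routine application of the quoted estimates and of \rl{abcd}.
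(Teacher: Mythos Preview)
Your approach is correct and follows the same decomposition $H_s=H_s^{(1)}+H_s^{(2)}$ and the same distance mechanism for $H_s^{(2)}$ as the paper. The one real difference is in how you treat $H_s^{(1)}$: you invoke the fixed-domain $\Lambda^{r+1/2}$ bound from \cite{gong-shi-nn} directly on all of $D^{12}_{r_2}$, so that $H_s^{(1)}$ contributes no $\theta$-loss at all and the whole $\theta$-dependence comes from $H_s^{(2)}$. The paper instead introduces a cutoff $\chi\in C^\infty_0(D^{2}_{(1-\theta/3)r_2})$ with $\chi=1$ on $D^{2}_{(1-\theta/2)r_2}$, writes $H_s^{(1)}\varphi=H_s^{(1)}(\chi\varphi)+H_s^{(1)}((1-\chi)\varphi)$, applies the \cite{gong-shi-nn} estimate only to the compactly supported piece $\chi\varphi$ (paying $\|\chi\varphi\|_{\Lambda^r}\le C_r\theta^{-r-r_0}\|\varphi\|_{\Lambda^r}$), and treats the far piece $(1-\chi)\varphi$ on the shrunk domain by the same $|\zeta-z|\gtrsim\theta$ argument used for $H_s^{(2)}$. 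Your route is tidier and gives a nominally better $\theta$-exponent, provided the estimate in \cite{gong-shi-nn} is indeed stated for arbitrary $\varphi\in\Lambda^r(D^{12}_{r_2})$ with stable constants; the paper's cutoff makes the appeal to the reference more robust and keeps the $\theta$-bookkeeping entirely explicit. One small omission in your $H_s^{(2)}$ paragraph: the kernel $\Omega^{01}$ on $S^1_+$ also carries the denominator $g^1(z,\zeta)\cdot(\zeta-z)$, so you should note that \rea{W1-dist} gives the same lower bound $\gtrsim\theta^2 r_2^2$ there as well (since $\rho^1(\zeta)\ge0$ on $S^1_+$); this is immediate and does not affect your argument.
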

\begin{proof}Recall from \re{hq1}-\re{nhq2-} that $H_s =H^{(1)}_s + H_s^{(2)}$ with  
\aln{}  
  H^{(1)}_s f&:=R_{  D^{2}; s-1}^0 \cL E_{D^{12}} f+R_{U^1;s-1 }^{01}[\db,\cL E_{D^{12}}] f,\\
H^{(2)}_sf&=L_{1^+;{s-1}
}^{01} \cL E_{D^{12}} f +L_{2;s-1}^{02} f+L_{12;s-1}^{012}f.
\end{align*}
Take a cut-off function $\chi\in C^\infty_0(D^{2}_{(1-\theta/3)r_2})$ with $\chi=1$ on $D^{2}_{(1-\theta/2)r_2}$, and decompose
$
 H^{(1)}_s f= H^{(1)}_s(\chi f)+ H^{(1)}_s( (1-\chi)f).
$
Then the estimate in~\cite{gong-shi-nn} says that 
$$
\|H_s^{(1)}(\chi \var)\|_{\Lambda^{r+1/2}(D^{1 2}_{r_2})}\leq C_r  \|\chi\var\|_{\Lambda^r(D^{1 2}_{r_2})}.
$$
We can choose $\chi$ such that $\|\chi\|_{C^r}\leq {C_r}{\theta^{-r-1}}$. Thus,  
$$
\|\chi\var\|_{\Lambda^r(D^{12}_{r_2})}\leq {C_r }{\theta^{-r-r_0}}\|\var\|_{\Lambda^r(D^{12}_{r_2})}.
$$
Since $(1-\chi)\var$ vanishes on $D_{(1-\theta/2)r}^{12}$ and
\ga{}\label{giest}
|g^i(z,\zeta)\cdot(\zeta-z)|\geq |\zeta-z|^2/C,\quad z\in D^{1}\cap D^2,\zeta\in D^2\setminus D^1, i=0,1,2,
\end{gather}
then we can obtain $$
\|H_s^{(1)}((1-\chi) \var)\|_{\Lambda^{r+1/2}(D^{12}_{(1-\theta)r_2})}\leq  C_r\theta^{-2r-r_0} \|(1-\chi)\var\|_{\Lambda^r(D^{12}_{r_2})}.
$$
The $H_s^{(2)}$ is given by boundary integrals on $S^1_+, S^2$ and $ S^{12}$ respectively. The estimate \re{giest}  also yields 
$\|H_s^{(2)}\var\|_{\Lambda^{r+1/2}(D^{12}_{(1-\theta)r_2})}\leq  C_r \theta^{-2r-r_0} \|\var\|_{\Lambda^r(D^{12}_{r_2})}.
$
We have obtained the desired estimate of $H_s$. 
\end{proof}
We now derive estimates for the concave case. We still have $H_s=H_s^{(1)}+H_s^{(2)}$, where $H_s^{(1)}$ has the same form as in the convex case. However, 
$$
H_{s}^{(2)}f=\sum_{i=1}^2L_{i3,s}^{0i3}f- L^{01}_{1^+,s-1}\cL Ef +L_{12;s-1}^{123}f-\hat T_{ D^{23}_{(1-\theta)r}, s}L^{23}_{12;s} f.
$$
By \re{g3theta}, we have $
\RE\{g^3(z,\zeta)\cdot(\zeta-z)\} \geq\theta r_3^2$ for $z\in D^{23}_{(1-\theta)r}$ and $\zeta\in S^{13}\cup S^{23}\cup S^{12}\subset\pd D^{123}_r$. Some kernels in $H^{(2)}_s$   involve   first-order $z$-derivatives of $\nabla\rho^1(z)$.
As in the convex case, we can obtain 
$$
\|H^{(2)}_sf\|_{\Lambda^{r+1/2}(D^{123}_{(1-\theta)r})}\leq {C_r(\nabla\rho^1 )}{\theta^{-r-r_0}}\|\rho^1\|_{\Lambda^{3/2}}\|f\|_{\Lambda^\e}.
$$
The kernels of $H^{(1)}f$ involve the second-order $z$-derivatives  $ \nabla\rho^2(z)$.
Then by ~\cite{gong-shi-nn}*{Thm. 5.12}, we have  
$$
\|H^{(1)}_sf
\|_{\Lambda^{r+\del}(D^{123}_{(1-\theta)r})}\leq {C_r }{\theta^{-r-r_0}}\left(\|\rho^1\|_{\Lambda^{r+2+\del}}\|f\|_{\Lambda^\e}+ \|f\|_{\Lambda^r}\right).
$$
Therefore, we have obtained part $(a)$ below.
\begin{thm}[concave configuration estimate]\label{concave-est} Let $\e>0,r>0$ and $1\leq q\leq n-3$.  
Assume that $\rho^1\in C^{2+\e}$. The following hold.
\bpp
\item  The operators $H_q, H_{q+1}$ in Theorem~$\ref{cchf}$ satisfy
\al{}
\label{hqfr12}
\|H_sf\|_{\Lambda^{r+\del}(D^{123}_{(1-\theta)r})}&\leq \f{C_{r,\e}}{\theta^{3r+r_0}} (\|\rho^1\|_{\Lambda^{r+2+\del}}\|f\|_{\Lambda^\epsilon({D_{r}^{123}})}+  \|f\|_{\Lambda^r(D^{123}_{r})})
\end{align}
for $0\leq\delta\leq1/2$ and $0<\theta<1$.  Furthermore,   $C_{r,\e}=C_{r,\e}(\nabla\rho^1,\nabla^2\rho^1)$ is stable under small $C^2$ perturbations of $\rho^1$;
  the same estimate holds for the $\db$ solution operator $H_q$  in \rta{cchf-closed}.
  \item 
The homotopy formula \rea{tsqf+-cv} in Theorem~$\ref{cchf}$ holds for $f\in \Lambda^{\e}_{(0,q)}(D^{123})$ when $\db f\in \Lambda^{\e}_{(0,q)}(D^{123})$, and the $\db$-solution operator $H_q$  in \rta{cchf-closed} is valid for all $\db$-closed  $f\in\Lambda^{\e}_{(0,q)}(D^{123})$ and satisfies \rea{hqfr12}.
\epp
\end{thm}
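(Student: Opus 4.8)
Part $(a)$ has already been obtained above from the kernel estimates of \cite{gong-shi-nn} together with the bound \eqref{g3theta}, so the only thing left is part $(b)$: to drop the $C^{1+\e}$ hypothesis on $f$ in \rt{cchf} and \rt{cchf-closed}. The plan is a regularization argument. Since $\cL E_{D^{123}_r}$ maps $\Lambda^\e(D^{123}_r)$ into $\Lambda^\e_0(\cc^n)$ and the integral kernels of $H_q,H_{q+1}$ are locally integrable in $\zeta$ — with the kernels of the boundary pieces continuous in $z\in D^{123}_{(1-\theta)r}$, exactly as in the proof of \rt{cchf0} — the operators $H_q,H_{q+1}$ are already well defined on $\Lambda^\e_{(0,q)}(\ov{D^{123}_r})$. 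I would then approximate $f$ (and, in the non-closed case, $\db f$) by smooth forms, apply the formulas of \rt{cchf} and \rt{cchf-closed} to the approximants, and pass to the limit, using \eqref{hqfr12} to keep the $\Lambda^{r+\del}$ bounds uniform.

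The one new ingredient I need is an approximation lemma: given $f\in\Lambda^\e_{(0,q)}(\ov{D^{123}_r})$ with $\db f\in\Lambda^\e_{(0,q+1)}(\ov{D^{123}_r})$, there are $f_\nu\in C^\infty_{(0,q)}(\ov{D^{123}_r})$ with $\sup_\nu\bigl(\|f_\nu\|_{\Lambda^\e}+\|\db f_\nu\|_{\Lambda^\e}\bigr)\leq C\bigl(\|f\|_{\Lambda^\e}+\|\db f\|_{\Lambda^\e}\bigr)$ and $f_\nu\to f$, $\db f_\nu\to\db f$ in $\Lambda^{\e'}(\ov{D^{123}_r})$ for every $\e'\in(0,\e)$; equivalently $f_\nu\to f$ and $\db f_\nu\to\db f$ uniformly on $\ov{D^{123}_r}$ with $\Lambda^\e$-norms bounded, the two being equivalent through the interpolation inequality $\|g\|_{\Lambda^{\e'}}\leq C\|g\|_{C^0(\ov{D^{123}_r})}^{1-\e'/\e}\|g\|_{\Lambda^\e}^{\e'/\e}$. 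To build $f_\nu$ I would first extend by Rychkov, $F=\cL E_{D^{123}_r}f$, then take a finite cover of $\ov{D^{123}_r}$ by boundary charts together with one interior open set and a subordinate partition of unity; in each boundary chart I would flatten the boundary, translate the corresponding piece of $F$ a distance $t_\nu\downarrow0$ along a direction pointing strictly into $D^{123}_r$ (which exists near each boundary point because $D^{123}_r$ is a transversal intersection), and convolve with a mollifier of scale much smaller than $t_\nu$; the interior piece is simply mollified. Translation invariance of the $\Lambda^\e$ seminorm and boundedness of $\cL E_{D^{123}_r}$ on $\Lambda^\e$ keep the $\Lambda^\e$-norms of $f_\nu$ bounded; the only subtle point is that $\db$ does not commute with the chart diffeomorphisms, so $\db f_\nu=(\db f)_\nu+(\text{chart error})$, where the error inherits the regularity of $f$ and tends to $0$ in $\Lambda^{\e'}$, giving $\db f_\nu\to\db f$ there.

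With such $f_\nu$, \rt{cchf} gives $f_\nu=\db H_qf_\nu+H_{q+1}\db f_\nu$ on $D^{123}_{(1-\theta)r}$. Using part $(a)$ with source exponent $\e'$ in place of $\e$ (and a small gain $\del'>0$), $H_q$ and $H_{q+1}$ are bounded from $\Lambda^{\e'}$ into $\Lambda^{\e'+\del'}(D^{123}_{(1-\theta)r})$, hence $H_qf_\nu\to H_qf$ and $H_{q+1}\db f_\nu\to H_{q+1}\db f$ in $\Lambda^{\e'+\del'}$ and in particular $\db H_qf_\nu\to\db H_qf$ in the sense of distributions on $D^{123}_{(1-\theta)r}$. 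Letting $\nu\to\infty$ in the identity yields $f=\db H_qf+H_{q+1}\db f$ there; since $f$ and $H_{q+1}\db f$ are continuous, so is $\db H_qf$, and the identity holds pointwise, which is the first assertion of $(b)$. For the sharp bound, \eqref{hqfr12} applied to each smooth $f_\nu$ gives $\|H_sf_\nu\|_{\Lambda^{r+\del}(D^{123}_{(1-\theta)r})}\leq\frac{C_{r,\e}}{\theta^{3r+r_0}}\bigl(\|\rho^1\|_{\Lambda^{r+2+\del}}\|f_\nu\|_{\Lambda^\e}+\|f_\nu\|_{\Lambda^r}\bigr)$ with constants independent of $\nu$; since $H_sf_\nu\to H_sf$ pointwise and the $\Lambda^{r+\del}$-seminorm is lower semicontinuous under pointwise convergence (equivalently, bounded subsets of $\Lambda^{r+\del}=B^{r+\del}_{\infty,\infty}$ are weak-$*$ sequentially compact and the weak-$*$ limit has no larger norm), the same estimate passes to $H_sf$, which is \eqref{hqfr12}. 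In the $\db$-closed case one applies the lemma with $\db f=0$, so $\db f_\nu\to0$ in $\Lambda^{\e'}$, and the same passage to the limit in $f_\nu=\db H_qf_\nu+H_{q+1}\db f_\nu$ gives $f=\db H_qf$ on $D^{123}_{(1-\theta)r}$ together with the stated estimate for $H_q$.

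The hard part will be the approximation lemma with \emph{simultaneous} control of $f$ and $\db f$: a plain mollification of $F=\cL E_{D^{123}_r}f$ does not force $\db f_\nu$ to converge to $\db f$ along the part of $\pd D^{123}_r$ that $D^{123}_{(1-\theta)r}$ touches, because $\db F\ne\cL E_{D^{123}_r}(\db f)$ there, which is precisely why the translate-inward construction — and the bookkeeping of the resulting commutator/chart error — is needed; one must also invoke the interpolation and duality properties of the Hölder--Zygmund spaces $\Lambda^a=B^a_{\infty,\infty}$ to bridge the $\Lambda^{\e'}$-convergence of the $f_\nu$ with the uniform $\Lambda^{r+\del}$-bounds supplied by part $(a)$.
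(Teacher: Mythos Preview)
Your approach is correct and, for the non-closed half of $(b)$, matches the paper's: both approximate $f$ by smooth forms with uniformly bounded $\Lambda^\e$ norms and $\Lambda^{\e'}$-convergence for $\e'<\e$, apply \rt{cchf} to the approximants, and pass to the limit using the bounds of part $(a)$. The paper simply asserts the approximation lemma you sketch.

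For the $\db$-closed half the paper takes a cleaner route. Rather than reusing the general lemma (which gives non-closed approximants with $\db f_\nu\to0$ and then invokes the full homotopy formula), it exploits the normal form \re{rho1-v}: the single translation $L_t(z)=z+tie_{q+2}$ is holomorphic, hence commutes with $\db$, and satisfies $L_t(\ov{D^{123}_{(1-\theta)r}})\Subset D^{123}_r$ for $0<t<t_\theta$ (it pushes strictly into $D^1$ and $D^3$, while the $(1-\theta)$-shrinking absorbs the drift in $D^2$). Mollifying $L_t^*f$ then yields genuinely $\db$-closed smooth $f_j$ on $D^{123}_{(1-\theta)r}$. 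The price is that $f_j$ lives only on the smaller domain, so one applies $H_{D^{123}_{(1-\theta')r},q}$ for $0<\theta'<\theta/2$ and lets $\theta'\to0$ via Arzel\`a--Ascoli to recover $H_{D^{123}_r,q}f$ together with \re{hqfr12}. Your method also works in the present $(q+2)$-concave range $1\leq q\leq n-3$ (where \rt{cchf} is available), but the paper's avoids the partition of unity and its bookkeeping entirely.

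One simplification of your construction: the ``flatten the boundary'' step is unnecessary and is the sole source of the $\db$-commutation error you flag. Because $D^{123}_r$ is a transversal intersection of $C^2$ domains, each boundary point has an open cone of strictly inward directions in the \emph{original} coordinates; translating along a fixed vector per chart is holomorphic, so $\db$ commutes with it, and the only remaining cross term is $\sum_\alpha(\db\chi_\alpha)f_\nu^\alpha$, which tends to $0$ in $\Lambda^{\e'}$ since $\sum_\alpha\chi_\alpha\equiv1$.
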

\begin{proof}We verify $(b)$ using $(a)$.  We know that  $f=\db H_qf+H_{q+1}\db f$ when $f\in C^{1+\e}$. Let us write $H_q$ as $H_{D^{123}_r}$ to indicate the dependent on the domain. Now assume $f\in \Lambda^r_{(0,q)}(D^{123})$ with $\db f\in\Lambda^r$. We  find a sequence $f_j\in C^{\infty}_{(0,q)}(D^{123})$ such that $f_j $ and $\db f_j $ converge to $f,\db f$ in $\Lambda^{r'}(D^{123})$ as $j\to\infty$ for any $r'<r$. Furthermore, $$\|f_j\|_{\Lambda^{r}(D^{123})}\leq C_r\|f\|_{\Lambda^{r}(D^{123})}, \quad \|\db f_j\|_{\Lambda^{r}(D^{123})}\leq C_r\|\db f\|_{\Lambda^{r}(D^{123})}.$$
Then $H_qf=\lim H_qf_j$ and $H_{q+1}\db f=\lim H_{q+1}\db f_j$ gives us \rea{tsqf+-cv}.
Assume now $f$ is $\db$-closed. Set  $L_t(z)=z+tie_{q+2}$ with $t>0$, where $e_{q+2}$ is the $(q+2)$-th   unit vector of $\cc^n$. Fix $\theta\in(0,1/2)$. When $0<t<t_\theta$, we have $L_t(D^{123}_{(1-\theta)r})\subset D^{123}_r$. Using a  mollifier, we find $\db$-closed $f_j\in C^\infty(D^{123}_{(1-\theta)r})$ such that   $f_j$ converges to $f$ in $\Lambda^{r'}(D^{123}_{(1-\theta)r})$-norm for any $r'<r$, whereas $\|f_j\|_{\Lambda^{r}(D^{123}_{(1-\theta)r})}\leq C_r\|f\|_{\Lambda^{r}(D^{123}_{r})}$. We have $f_j=\db H_{D^{123}_{(1-\theta')r},q}f_j$ on $D^{123}_{(1-\theta)r}$ for $0<\theta'<\theta/2$.
Letting $j\to\infty$, we obtain $f=\db H_{D^{123}_{(1-\theta')r},q}f$ on $D^{123}_{(1-\theta)r}$ and
\eq{Hth''}
\|H_{D^{123}_{(1-\theta')r},q}f\|_{\Lambda^{r+\del}(D^{123}_{(1-\theta)r})}\leq   \f{C_{r,\e}}{\theta^{3r+r_0}} (\|\rho^1\|_{\Lambda^{r+2+\del}}\|f\|_{\Lambda^\epsilon }+  \|f\|_{\Lambda^r }).
\eeq
By the Arzel\`{a}-Ascoli theorem,  $H_{D^{123}_{(1-\theta'_j)r},q}f$ converges to $H_{D^{123}_{r},q}f$ in   $\Lambda^{r'+\delta}(D^{123}_{(1-\theta)r})$ norm for a sequence $\theta'_j\to0$ for any $r'<r$. Then  \re{Hth''} yield
\re{hqfr12} for $H_q$.
\end{proof}

We conclude this section with local homotopy formulas when $f$ and $\db f$ are   $L^\infty$.  

\begin{thm}[$C^{1/2}$ and $C^0$ estimates] \label{Linf}Let $(D^1,D^2_{r_2})$ be an $(n-q)$ convex configuration as in \rta{conv-est}. Then there exists a homotopy formula 
$
f=\db H_{q}f+H_{q+1}\db f
$
   on $D_{r_2}^{12}$, for $f\in L_{(0,q)}^\infty(D^{12}_{r_2})$ with $\db f\in L^\infty(D_{r_2}^{12})$. Moreover,
$$
\|H_sg\|_{C^{1/2}(D^{12}_{(1-\theta){r_2}})}\leq {C}{\theta^{-c_0}}
\|g\|_{L^\infty(D^{12}_{r_2})}.
$$
Let $(D^1,D^2_{r_2}, D^3_{r_3})$ be a $(q+2)$ concave configuration as in \rta{cchf}. Then there exists a homotopy $
f=\db H_{q}f+H_{q+1}\db f
$ on $D^{123}_{(1-\theta)r}$, for $f\in L_{(0,q)}^\infty(D^{123}_r)$ with $\db f\in L^\infty(D_r^{123})$. Furthermore,
$$
\|H_sg\|_{C^{0}(\ov{D^{123}_{(1-\theta)r}})}\leq {C}{\theta^{-c_0}}
\|g\|_{L^\infty(D^{123}_r)}.
$$
\end{thm}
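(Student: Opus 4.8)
\textbf{Proof plan for Theorem~\ref{Linf}.}
The strategy is to obtain the $L^\infty$ statements as limiting cases of the Hölder–Zygmund estimates in Theorems~\ref{conv-est} and \ref{concave-est}, combined with the homotopy formulas of Theorems~\ref{hf-c} and \ref{cchf}. First I would treat the convex configuration. Given $f\in L^\infty_{(0,q)}(D^{12}_{r_2})$ with $\db f\in L^\infty$, I would mollify: for $t>0$ let $L_t$ be a small translation pushing $D^{12}_{(1-\theta)r_2}$ into $D^{12}_{r_2}$ (as in the proof of Theorem~\ref{concave-est}), and set $f_t=\chi_\e\ast(f\circ L_t)$, a smooth form on a slightly shrunk domain, with $\|f_t\|_{L^\infty}$ and $\|\db f_t\|_{L^\infty}$ uniformly bounded and $f_t\to f$, $\db f_t\to\db f$ in $L^p_{loc}$ for every $p<\infty$. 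Applying Theorem~\ref{hf-c} to each $f_t$ gives $f_t=\db H_qf_t+H_{q+1}\db f_t$ on the shrunk domain, and the $\Lambda^{1/2}$-estimate of Theorem~\ref{conv-est} with $r=\e$ small — more precisely a version of it with the $\Lambda^r$-norm on the right replaced by the $L^\infty$-norm, which is what the kernel estimates in~\cite{gong-shi-nn} actually deliver for $\db$-closed-free data — bounds $\|H_sf_t\|_{C^{1/2}}\le C\theta^{-c_0}(\|f_t\|_{L^\infty}+\|\db f_t\|_{L^\infty})$. I would then pass to the limit $t,\e\to0$ using Arzelà–Ascoli on the uniformly bounded, uniformly $C^{1/2}$ family $H_sf_t$ to extract the limiting operator and the homotopy identity on $D^{12}_{(1-\theta)r_2}$.

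The concave case follows the same template but with two differences. First, one must invoke Theorem~\ref{cchf} (and its domain-shrinking built in) in place of Theorem~\ref{hf-c}, so the homotopy identity already comes with a loss of $\theta r$ in the domain; composing with the mollification shrinkage is harmless. Second, the relevant estimate is the $\delta=0$ case of \eqref{hqfr12}, which yields only a $C^0$ (sup-norm) bound, not a $C^{1/2}$ bound — this is consistent with the Laurent-Thiébaut–Leiterer obstruction quoted before Theorem~\ref{cchf-closed}. Concretely I would apply \eqref{hqfr12} with $r=\e$, $\delta=0$, absorbing $\|\rho^1\|_{\Lambda^{2+\e}}$ and the $\Lambda^\e$-norms into the constant, to get $\|H_sf_t\|_{C^0(\ov{D^{123}_{(1-\theta)r}})}\le C\theta^{-c_0}(\|f_t\|_{L^\infty}+\|\db f_t\|_{L^\infty})$; then again Arzelà–Ascoli (now only equicontinuity from the quantitative modulus hidden in the $C^0$-estimate, or more safely a uniform $C^{\e'}$-bound for some $\e'>0$ obtained by taking $\delta=\e'$ small) gives a convergent subsequence and the limiting homotopy on $D^{123}_{(1-\theta)r}$.

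The main obstacle I anticipate is the following: Theorems~\ref{conv-est} and \ref{concave-est} are stated with $\|f\|_{\Lambda^r}$ on the right-hand side with $r>0$, whereas I need the right-hand side to be $\|f\|_{L^\infty}$, i.e. the endpoint $r=0$. The Hölder–Zygmund scale is not well-behaved at $r=0$, and one cannot simply set $r=0$ in the stated estimates. The fix is to go back to the integral kernels: the operators $R^0_{D^2;s-1}\cL E$, $R^{01}_{U^1}[\db,\cL E]$, and the boundary operators $L^{\bigcdot}_{\bigcdot}$ all have kernels that are (after the transversality/Levi estimates \eqref{W1-dist}, \eqref{giest}, \eqref{g3theta}) dominated by $|\zeta-z|^{-(2n-1)}$ type singularities, which are integrable with a genuine $1/2$-gain in the convex case and a $0$-gain in the concave case when tested against $L^\infty$ data; this is exactly the classical Henkin–Romanov / Lieb-type estimate, and the shrinking-domain bookkeeping has already been carried out in the proofs of Theorems~\ref{conv-est} and \ref{concave-est}. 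So rather than a genuinely new argument, the proof consists of re-running those kernel estimates with the $\Lambda^r$-target and $\Lambda^r$-source replaced by $C^{1/2}$ (resp. $C^0$) target and $L^\infty$ source — the mollification-and-limit step above then transfers the estimate from smooth data to $L^\infty$ data and simultaneously establishes the homotopy identity in the distributional sense. I would also remark that the commutator term $[\db,\cL E]$ is smoothing of order $1$ on the Rychkov extension, so it causes no trouble at the $L^\infty$ endpoint.
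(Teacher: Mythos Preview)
Your overall scheme---approximate by smooth forms, apply the known homotopy formula, and pass to the limit via Arzel\`a--Ascoli---is exactly what the paper does. In the convex case your plan is essentially correct: the kernels of $H_s$ there involve only $\nabla\rho^1$ (since $g^1(z,\zeta)$ in \eqref{g1} depends on $\zeta$-derivatives of $\rho^1$ and on $\ov\zeta_j-\ov z_j$), so the classical Henkin--Romanov estimate gives a genuine $C^{1/2}$ bound from $L^\infty$ data, and Arzel\`a--Ascoli applies directly.

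In the concave case, however, there is a genuine gap. You propose to use \eqref{hqfr12} with $r=\e$ and $\delta=0$ (or $\delta=\e'$), but that estimate requires $\rho^1\in\Lambda^{r+2+\delta}\subset C^{2+\e}$, whereas Theorem~\ref{Linf} must hold under the bare hypothesis $\pd M\in C^2$ (this is what is used in the proof of Theorem~\ref{vbint}). Moreover, your diagnosis of why the concave case yields only $C^0$ is off: it is not that the kernel singularities are worse. The point is that in the concave configuration $g^1(z,\zeta)$, given by \eqref{HL2pg81}, depends on $\partial\rho^1/\partial z_j$, so $\db_z\omega^1$---and hence the kernels of $H_s^{(1)}$---carry factors of $\nabla_z^2\rho^1(z)$. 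When $\rho^1\in C^2$ these factors are merely continuous, so $H_s$ itself cannot be expected to map $L^\infty$ to $C^{1/2}$, and a bare $C^0$ bound on $H_s f_\ell$ gives no equicontinuity for Arzel\`a--Ascoli.

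The paper's fix is a structural decomposition that you are missing: one expands $H_s g=\sum_{j=1}^m a_{s,j}(z)\,H_{s,j}g$, where each $a_{s,j}$ is a polynomial in $\nabla_z^2\rho^1$ (hence continuous on $\ov{D^1}$) and each $H_{s,j}$ has a kernel involving only $\nabla\rho^1$. For these pieces the Henkin--Romanov estimate applies and gives $\|H_{s,j}g\|_{C^{1/2}(D^{123}_{(1-\theta)r})}\le C\theta^{-r_0}\|g\|_{L^\infty}$. One then runs Arzel\`a--Ascoli on each $H_{s,j}f_\ell$ separately to obtain limits $u_j, v_j\in C^{1/2}$, and \emph{defines} $H_q f:=\sum a_{q,j}u_j$, $H_{q+1}\db f:=\sum a_{q+1,j}v_j$, which are then only $C^0$ because of the continuous coefficients $a_{s,j}$. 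This is why the concave estimate in Theorem~\ref{Linf} is stated in $C^0$ rather than $C^{1/2}$, and it is the step your proposal does not supply.
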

\begin{proof}Let us consider only the concave case, since the strongly pseudoconvex case is simpler and already well known. Recall that  
$
f=\db H_qf+H_{q+1}\db f
$  
holds in the sense of distributions on $D^{123}_{(1-\theta)r}$
whenever $ f,\db f$ are continuous on $\ov{D^{123}}$.
We expand  
$
H_sg=\sum_{j=1}^m a_{s,j} H_{s,j}g,
$
where $a_{s,j}(z)$ are polynomials in $\nabla_z^2\rho$ whose coefficients are smooth functions in $z$. Thus, $a_{s,j}$ are continuous on $\ov {D^1}$. Each $\cL H_{s,j}$, of which the kernels involve only $\nabla\rho^1$,  gains $1/2$ derivative 
\eq{Hsjg}
\|H_{s,j}g\|_{C^{1/2}(D^{123}_{(1-\theta)r})}\leq{C_1}{\theta^{-r_0}}\|g\|_{L^\infty }
\eeq
with $\|\cdot\|_{L^\infty }:=\|\cdot\|_{L^\infty(D_r^{123})}$, which is the  estimate of Henkin-Romanov~\cites{MR0293121,MR986248}. Suppose that $f$ is a $(0,q)$ form on $D_r^{123}$ such that $f,\db f$ are $L^\infty$ on $D^{123}$. Then we   find a sequence $f_\ell \in C_{0,q}^{\infty}(\ov{ D_r^{123}})$ such that $f_\ell,\db f_\ell$ converge to $f,\db f$ in $L^1$ norm on $D^{123}$. Moreover,
\eq{fL}
\|f_\ell\|_{L^\infty}\leq C_2\|f\|_{L^\infty}, \quad  \|\db f_\ell\|_{L^\infty}\leq C_2\|\db f\|_{L^\infty}.
\eeq
Using \re{Hsjg}-\re{fL}, applying the Arzel\`{a}-Ascoli theorem  to  $H_{q,j}f_\ell,H_{q+1,j}\db f_\ell$, and passing to a subsequence if necessary,  $H_{q,j}f_\ell,H_{q+1,j}\db f_\ell$ converge in $L^\infty(D^{123}_{(1-\theta)r})$ norm to $u_{j},v_{j}$. Moreover,
$$
\|u_j\|_{C^{1/2}(D^{123}_{(1-\theta)r})}\leq{C_2C_1}{\theta^{-r_0}}\|f\|_{L^\infty}, \quad
\|v_j\|_{C^{1/2}(D^{123}_{(1-\theta)r})}\leq{C_2C_1}{\theta^{-r_0}}\|\db f\|_{L^\infty}.
$$
We can define $H_{q}f=\sum a_{q,j}u_j$ and $H_{q+1}\db f=\sum a_{q+1,j}v_j$.   The above-mentioned convergence of $f_\ell,H_{q,j}f_\ell,H_{q+1,j}\db f_\ell$ implies that $f=\db H_qf+H_{q+1}\db f$ on $D^{123}_{(1-\theta)r}$ in the sense of distributions.
\end{proof}

\setcounter{thm}{0}\setcounter{equation}{0}
\section{Integrability of complex vector bundles}\label{sec:NM}
In Section 2, we have seen that \rt{vbint-} follows from the following result.
\begin{thm}\label{vbint} 
Let $M$ be a domain in $\mathbb C^n$ with $\pd M\in C^2$. Assume that    the Levi-form of $M$ has
either $(n-1)$ positive or at least $3$ negative Levi eigenvalues at $\zeta_0\in\pd M$. Let $U$ be an open set in $\ov M$ and $\zeta_0\in U$.    Assume that $\om$ is a $k_0\times k_0$ matrix of $(0,1)$ forms on $U$ such that $\db\om=\om\wedge\om$ holds on $U\setminus\pd M$ in the sense of distributions. Assume that $\om$ is in $L^\infty(U)$ $($resp. $\Lambda^r(U)$ with $r>0)$. Then there is an invertible matrix $A$   of class $\Lambda^{1/2}$ $($resp. $\Lambda^{r+1/2})$ on some neighborhood $U'$ of $\zeta_0$ in $\ov M$ such that 
$
\db A+A\om=0
$
in the sense of distributions on $U'\setminus\pd M$.
\end{thm}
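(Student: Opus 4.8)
The plan is to solve the nonlinear equation $\db A + A\om = 0$ by an iteration scheme built on the linear homotopy operators of Sections~3--5. Rewriting the unknown as $A = I + B$, the equation becomes $\db B = -(I+B)\om$, which we solve by Picard-type iteration: set $B_0 = 0$ and, having constructed $B_k$, let $B_{k+1}$ be the solution furnished by the homotopy formula applied to the $(0,1)$-form $f_k := -(I+B_k)\om$. For this to make sense we must check at each stage that $f_k$ is $\db$-closed on the relevant domain, so that $\db H_{q+1}\db f_k$ drops out and the homotopy formula reduces to $f_k = \db H_1 f_k$; we then set $B_{k+1} = H_1 f_k$, so $\db B_{k+1} = f_k$ on the (slightly shrunk) domain. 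The $\db$-closedness is exactly where the nonlinear integrability hypothesis $\db\om = \om\wedge\om$ enters: a direct computation using $\db B_k = -(I+B_k)\om$ on the previous domain shows $\db f_k = \db B_k \wedge \om + (I+B_k)(\om\wedge\om) = -(I+B_k)\om\wedge\om + (I+B_k)\om\wedge\om = 0$, so the algebra closes provided one is careful that each $B_k$ solves its equation on a domain containing the support of the next integration.

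The geometric setup is to reduce to the model domains of Sections~3 and~4. Near $\zeta_0$, after the biholomorphic normalization $\psi$, the intersection $U\cap M$ is locally either an $(n-q)$-convex configuration $D^{12}_r$ with $q=1$ (the strongly pseudoconvex case, where the Levi form of $\pd M$ is positive) or, in the concave case, a $(q+2)$-concave configuration $D^{123}_r$ with $q=1$ — this is precisely where the hypothesis of at least three negative Levi eigenvalues is used, since $q+2 = 3$ forces $q=1$ and one needs $1 \le q \le n-3$, i.e. $n \ge 4$; the strongly pseudoconvex branch imposes no such dimension restriction. In either case Theorem~\ref{conv-est} (resp. Theorem~\ref{concave-est} together with Theorem~\ref{cchf-closed}) provides a $\db$-solution operator $H_1$ on shrinking subdomains with the sharp $1/2$-gain estimate $\|H_1 f\|_{\Lambda^{r+1/2}(D_{(1-\theta)r})} \le C\theta^{-N}\|f\|_{\Lambda^r(D_r)}$ (resp. the $C^{1/2}$ estimate of Theorem~\ref{Linf} when $\om \in L^\infty$). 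The shrinking of the domain at each step is handled by fixing a summable sequence $\theta_k$ with $\sum\theta_k < 1/2$, so the domains $D^{(k)} = D_{(1 - \theta_1 - \cdots - \theta_k)r}$ decrease to a fixed limit domain $U'$ with $\zeta_0 \in U'$; the factors $\theta_k^{-N}$ are absorbed because $\om$ can be taken small in norm after shrinking the original neighborhood (replacing $z \mapsto \e z$ shrinks $\|\om\|_{\Lambda^r}$ while leaving the integrability condition scale-invariant).

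The convergence and regularity step is then a contraction argument. With $\om$ small, the product estimates of Lemma~\ref{abcd} give $\|B_{k+1} - B_k\|_{\Lambda^{r+1/2}(D^{(k+1)})} \le C\theta_{k+1}^{-N}\|B_k - B_{k-1}\|_{\Lambda^{r}(D^{(k)})}\|\om\|_{\Lambda^r}$, and by interpolating $\Lambda^r$ between $\Lambda^{r+1/2}$ and a weaker norm one controls the loss incurred by restriction to the smaller domain; choosing $\|\om\|$ small enough relative to the fixed geometric constants makes the total a convergent geometric-type series, so $B_k \to B$ in $\Lambda^{r+1/2}(U')$, and $A := I + B$ is invertible near $\zeta_0$ by smallness of $B$. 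Passing to the limit in $\db B_{k+1} = -(I+B_k)\om$ — valid in the sense of distributions, using that each $\db B_{k+1}$ is an honest $L^\infty$ (resp. $\Lambda^r$) form — yields $\db A + A\om = 0$ on $U'\setminus\pd M$, and the $\Lambda^{1/2}$ (resp. $\Lambda^{r+1/2}$) regularity is inherited from the uniform bounds. The main obstacle I anticipate is bookkeeping the shrinking domains compatibly with the closedness identity for $f_k$: one must ensure $\db B_k = f_{k-1}$ holds on a domain large enough to legitimize the computation $\db f_k = 0$ on the domain where $H_1$ is next applied, which forces a careful ordering of the $\theta_k$'s and is the technical heart of making the scheme rigorous; a secondary subtlety is that in the $L^\infty$ concave case one only has the $C^0$ estimate on the relevant configuration unless the form is $\db$-closed, so one must verify that each iterate's input $f_k$ is genuinely $\db$-closed and invoke the closed-form estimate of Theorem~\ref{Linf} rather than the general one.
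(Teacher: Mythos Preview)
Your scheme has a genuine gap at the step where you claim each $f_k = -(I+B_k)\om$ is $\db$-closed. In a Picard iteration one has $\db B_k = f_{k-1} = -(I+B_{k-1})\om$, not $-(I+B_k)\om$; already at $k=0$ you have $f_0=-\om$ and $\db f_0 = -\om\wedge\om$, which does not vanish for matrices. So the input to $H_1$ is never closed, and in the concave $L^\infty$ case you cannot invoke the closed-form solution operator at all. Even if you switch to the full homotopy formula $f_k = \db H_1 f_k + H_2\db f_k$ and carry the extra term, a Picard contraction only gives \emph{linear} decrease of $\|B_{k+1}-B_k\|$, and on a nested sequence of shrinking domains each step costs a factor $\theta_k^{-N}$ with $\theta_k\to 0$; a fixed contraction ratio cannot beat this blow-up, so the series diverges.

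The paper's proof is structurally different: it is a KAM/Newton iteration in which one performs at each step an admissible frame change $A_k=I-P_k\om_k$ and replaces $\om_k$ by the \emph{transformed} connection form $\om_{k+1}$. The crucial point is that $\om_{k+1}$ again satisfies $\db\om_{k+1}=\om_{k+1}\wedge\om_{k+1}$ (this is Lemma~\ref{om''}/Proposition~\ref{d2=0}, not a closedness claim about an auxiliary right-hand side), and applying the full homotopy formula $\om_k=\db P_k\om_k + Q_k(\om_k\wedge\om_k)$ yields $\|\om_{k+1}\|\lesssim \theta_k^{-c_0}\|\om_k\|^2$, hence $\delta_{k+1}=\delta_k^{3/2}$. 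This super-exponential decay absorbs the polynomial losses from shrinking. Two further ingredients are missing from your outline: the initial smallness $\|\om_0\|<\delta_0$ is achieved by a \emph{non-isotropic} dilation $(\tilde z_1,\tilde z')\mapsto(\e^2\tilde z_1,\e\tilde z')$ (an isotropic $z\mapsto\e z$ does not preserve the model domain), and in the $3$-concave $\Lambda^r$ case the estimate \eqref{hqfr12} carries a factor $\|\rho^1\|_{\Lambda^{r+5/2}}$ which is unavailable for $\pd M\in C^2$; the paper instead runs the $L^\infty$ iteration first and then upgrades the regularity of $A_\infty$ up to the boundary via Hartogs extension from nearby smooth model domains.
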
 
\begin{proof}The proof is based on a KAM-type iteration method. Such an approach was developed by Webster~\ci{MR999729} for an interior version of Newlander-Nirenberg theorem. It  was also used in~\cites{MR2742034,MR2829316} to establish the integrability of CR vector bundles on strictly pseudoconvex hypersurfaces in $\cc^n$ for $n\geq4$. 

\medskip

\noindent{\bf The case of $L^\infty$ connections.} For a boundary point  $\zeta_0\in\pd M$, we can find a biholomorphic mapping $\psi$, with $\psi(\zeta_0)=0$, to get    a convex configuration $(D^1,D_{r_2}^2)$  or a concave configuration $(D^1,D_{r_2}^2,D_{r_3}^3)$ such that
$
\var=\db P_{D_{r}}\var+Q_{D_{r}}\db\var
$
on $D_{(1-\theta)r}$ for $(0,1)$-forms $\var$ given on $D_r$, where $D_{r}:=D^{1}\cap D^2_{r_2}$ for $(n-1)$ convex case and $D_{r}:=D^{1}\cap D^2_{r_2}\cap D^3_{r_3}$ for $3$-concave case. 
 Moreover, we have the $C^0$ estimate
\eq{pC0}
\|P_{D_{r}}\var\|_{C^0(\ov D_{(1-\theta)r})}\leq {C_0}{\theta^{-c_0 }}\|\var\|_{L^\infty( D_r)};
\eeq
 same estimate holds for $Q_{D_{r}}$.  The problem is local and hence it suffices to prove the theorem replacing $M,\zeta_0,E, D$ by $D^1,0,\psi_*E, \psi_*D$. We still denote the latter by $E,D$.

Let $\om_0$ be given on  $D_{0}$ and satisfy $\db\om_0=\om_0\wedge\om_0$. We define  sequences
$$
r_{k+1}=(1-\theta_k)r_k, \ \theta_k=(k+2)^{-2}, \  D_k:=D_{r_k}, \  P_k=P_{D_k},\quad  Q_k=Q_{D_k}, \quad k\geq0. 
$$
We want to construct a sequence of invertible matrices $A_k=I+B_k$ defined on $D_{k+1}$. On $D_{k+1}$, 
  we define $\om_{k+1}$ inductively by
\eq{omk+1}
\om_{k+1}=(\db A_k)A_k^{-1}+\om_k=\db B_k+\om_k+(\db B_k)\tilde B_k,
\eeq
where $\tilde B_k=A_k^{-1}-I$. 
Since $\om_k=\db P_k\om_k+Q_k\db\om_k$ and $\db\om_k=\om_k\wedge\om_k$, we choose  $B_k=-P_{k}\om_k$. Substituting into \re{omk+1}, we obtain
\aln{}
\om_{k+1}&= Q(\om_k\wedge\om_k)+(\om_k-(Q_k(\om_k\wedge\om_k))\tilde B_k.\end{align*}
We define $\del_{k+1}:=\del_k^{3/2}$, where $0<\del_0<\f{1}{4k_0}$ and $\del_0$ is sufficiently small.  Note that $\del_k=\del_0^{(3/2)^k}$ decrease rapidly to $0$ as $k\to\infty$.
We will show that $A_k$ are admissible and
\eq{Bkc0}
\|\om_k\|_{L^\infty(\ov D_k)}<\delta_k,  \quad \|B_k\|_{C^0(\ov D_{k+1})}=\|P_k\om_k\|_{C^0(\ov D_{k+1})}<C_0\theta_k^{-c_0}\del_k,
\eeq
  provided that the initial $\om_0$ satisfies  
\eq{}\label{om0}
\|\om_0\|_{L^\infty}<\delta_0.
\eeq

For convenience, we omit explicit mention of the domains on which the norms are taken.
From \re{Bkc0}, we know that $I+B_k$ is invertible on $D_{k+1}$, and  furthermore
$$
\|\tilde B_k\|_{C^0}\leq 2\|B_k\|_{C^0}.
$$ 
Since $\db A_k=\db B_k=\om_k-Q(\om_k\wedge\om_k)\in L^\infty(D_{k+1})$, the matrix $A_k$ is admissible on $D_{k+1}$. Therefore the formal integrability condition $\db\om_{k+1}=\om_{k+1}\wedge\om_{k+1}$ is satisfied and the iteration may continue.
From the expression for $\om_{k+1}$, we obtain the estimate $\|\om_{k+1}\|_{C^0}\leq C_0\theta_k^{-c_0}\del_k^2+C_0 \theta_k^{-2c_0}\del_k^2<\del_{k}^{3/2}.
$
 This verifies \re{Bkc0} for $\om_{k+1}$, and hence  for all $k$,
provided \re{om0} is satisfied. 
Set $\hat A_k=A_kA_{k-1}\cdots A_0$.  Then $\hat A_{\infty}=\lim_{k\to\infty}\hat A_{k}$ is continuous and invertible on $D_{r_\infty}$ for $r_\infty=\lim r_k$,
and $\db A_\infty +\hat A_\infty\om_0 =0$. To apply \rp{d2=0}, we must show that $\db \hat A_\infty\in L^2_{loc}(D_{r_\infty})$, which follows from
$\hat A_k-\hat A_{k-1}=B_kA_{k-1}\cdots A_0$ and
\aln{}
&\|\db B_k\|_{L^\infty}\leq \|\om_k\|_{L^\infty}+\|Q(\om_k\wedge\om_k)\|_{L^\infty}\leq \del_k+{C}{\theta_k^{-c_0}}\del_k^{2}<2\del_k,\\
&\|\db (\hat A_{k+1}-\hat A_k)\|_{L^\infty}\leq k_0\|\db B_k\|_{L^\infty}\prod_{j=0}^{k-1} (1+k_0\| B_j\|_{L^\infty})\\
&\quad +k_0\| B_k\|_{L^\infty}\sum_{j=0}^{k-1}k_0\|\db B_j\|_{L^\infty}\prod_{0\leq \ell\leq  k-1,\ell\neq j} (1+k_0\| B_\ell\|_{L^\infty})  \leq C^{k+1} \del_k^{3/2}.\nonumber
\end{align*} 

Therefore, it remains to achieve
$\|\om_0\|_{L^\infty(D_{r_0})}<\del_0$ for  any given $\del_0>0$. As in \ci{MR2742034}, we can achieve this by a non-linear   dilation at $x$ as follows.  
Recall that $D^1\subset \Delta_{r_0}\times\Del^{n-1}_{r_0/{C_n}}$, with $z_3,z_1$ switched, is defined by
\begin{gather}\label{rho1-v-5}
\rho^1(z)=-y_{1}-|z_1|^2-|z_2|^2-|z_{3}|^2+\la_{4}|z_{4}|^2+
\cdots+\la_n|z_n|^2+R(z)<0.
\end{gather}
 Consider the dilation $z=L(\tilde z)=(\e^2 \tilde z_1,\e \tilde z')$. Let $\tilde D$ be the induced connection for the pull-back bundle  $\tilde E=L^*E$ over   $\tilde D^1=L^{-1}(D^1)\subset\Delta_{r_0}\times\Del^{n-1}_{r_0/{C_n}}$.
Set  $\tilde e=L^*e=e\circ L$. Thus
$$
\tilde D\tilde e=L^*\om\otimes \tilde e,\quad \tilde D(\tilde f\tilde s)=d\tilde f\otimes s+\tilde f\tilde D\tilde s
$$
are well-defined as $L^*d=dL^*$. 
Note that $\tilde D^1\subset \Delta_{r_0}\times\Delta^{n-1}_{r_0/{C_n}}$ is given by
\begin{gather*}
\tilde\rho^1(z)=-y_{1}-|z_2|^2-|z_{3}|^2+\la_{4}|z_{4}|^2+
\cdots+\la_n|z_n|^2+\tilde R(z)<0.
\end{gather*}
We can find a biholomorphism $\hat\psi$ that is independent $\e$ such that $\hat D^1:=\hat\psi^{-1}(\tilde D^1)$ still has the form \re{rho1-v-5}.  The connection form for $\hat E=\hat\psi^*\tilde E$ with connection $\hat D=\hat\psi^*\tilde E$ is given by $\hat\om=\hat\psi^*L^* \om$. Since $L,\hat\psi$ are biholomorphic, we have $\hat\om^{(0,1)}=\hat\psi^*L^*\om^{(0,1)}$.  Since $\om^{(0,1)}$ has degree $1$ (i.e. homogeneous of degree 1 in $d\ov z$),   the non-isotropic dilation $L$ yields
$$
\hat\om^{(0,1)}=O(\e), \quad \|\hat\om^{(0,1)}\|_{L^\infty(D_{r_0})}<\delta_0.
$$
when $\e$ is sufficiently small. 
We emphasize that with the non-linear dilation $L\circ\hat\psi$, the constant $C_0$ in \re{pC0}  depends only on $n,r_0$.

\noindent{\bf The case of strongly pseudoconvex boundary.}   Assume  that $\om_0\in \Lambda^a$ with $\infty>a>0$. Then using \rt{conv-est}, we can replace \re{Bkc0} by
$$
\|B_k\|_{\Lambda^{a+1/2} }<C_0\theta_k^{-c_0}\del_k, \quad \|\om_k\|_{\Lambda^a }<\delta_k.
$$
Using the non-linear dilation, we can still achieve $|\om_0|_{\Lambda^{a}(D_0)}<\delta_0$. The same proof yields $A\in \Lambda^{a+/2}(D_\infty)$. 

Assume now that $\om_0\in C^\infty(D_0)$. We want to show that the sequence $A_k$ constructed for a finite $a$, say $a=1$, yields $A_\infty\in C^\infty(D_\infty)$. Indeed, fix $0<\ell<\infty$.     Using  $\|uv\|_{\Lambda^\ell}\leq C_\ell(\|u\|_{\Lambda^\ell}\|v\|_{C^0}+\|u\|_{C^0}\|v\|_{\Lambda^\ell})$,
we    obtain
$$
\|\om_{k+1}\|_{\Lambda^\ell}\leq C_\ell\theta_k^{-c_0(\ell+1)}\|\om_{k}\|_{C^0}\|\om_{k}\|_{\Lambda^\ell}\leq C_\ell^{k+1}\|\om_0\|_{\Lambda^\ell}\prod_{j=0}^k\theta_j^{-c_0(\ell+1)}\del_j.
$$
We have $L_{k+1}/L_k=C_\ell\theta_k^{-c_0(\ell+1)}\del_k<\del_{k-1}$ when $k>k_\ell$. Therefore
$\|\om_{k+1}\|_{\Lambda^\ell}
\leq C_\ell'\del_{k-1}$ for $ k>k_\ell.
$
We also have $\|B_{k+1}\|_{\Lambda^\ell}\leq C_\ell\theta_k^{-c_0(\ell+1)}\|\om_k\|_{\Lambda^\ell}\leq \del_{k-2}$ for $k>k_\ell'$. Now it is easy to verify that $A_\infty =\prod_{j=0}^\infty(I+B_j)$ is in $\Lambda^\ell(D_\infty)$. 
\medskip

\noindent{\bf The case of $3$-concave boundary.} We want to show that the solution $A_\infty$ in the $L^\infty$ case near a $3$-concave boundary point of $M$ actually belongs to $\Lambda^{a+1/2}(D_\infty)$
when $\om_0\in\Lambda^a(D_0)$ additionally. The proof relies on the Hartogs's extension theorem. This technique was developed in~\ci{MR4866351} for $\db$-solutions for $(0,1)$ forms near $2$-concave boundary point and was later used  in~\ci{gong-shi-nn} in the context of Newlander-Nirenberg theorem near a $3$-concave boundary point. 

We first remark that using \rt{concave-est} instead of \rt{conv-est},
 the argument for the first two cases already shows that if $\pd M\in \Lambda^{a+5/2}$ then we can construct $A_\infty\in\Lambda^{a+1/2}(D_{r_\infty})$ for $\om_0\in\Lambda^a(D_0)\cap L^\infty(D_0)$  $0\leq a\leq\infty$. Let us take any $3$-concave boundary point $x\in D_{r_\infty/C_*}\cap \pd D_0$, where $C_*$ is sufficiently large.
We find a biholomorphic map $\psi_x$ such that $\psi_x(x)=0$ and $D^1_{x}:=\psi_x(D_{r_\infty})\cap (\Delta_{r_0}\times\Delta^{n-1}_{r_0/{2}})$ is given by
\begin{gather*}
\rho^1(z)=-y_{1}-|z_1|^2-|z_2|^2-|z_{3}|^2+\la_{4}|z_{4}|^2+
\cdots+\la_n|z_n|^2+R(z)<0
\end{gather*}
with  $\|R\|_{C^2}\leq \del_0^*$. 
Consider a smooth domain $\hat D^1\subset  \Delta_{r_0}\times\Del^{n-1}_{r_0/{2}}$ defined by
\begin{gather*}
\hat \rho^1(z)=-y_{1}-|z_1|^2-|z_2|^2-|z_{3}|^2+\la_{4}|z_{4}|^2+
\cdots+\la_n|z_n|^2+\e|z|^2<0,
\end{gather*}
where $C_a$ is independent of $x$. When $\e>0$, depending on $\del_0^*$, is sufficiently small, we have $\hat D^1\subset D_x^1$, while $\pd\hat D^1$ intersects $\pd D_x^1$ only at $0$. Recall that $D''e_0=\om_0 e_0$. 
Therefore, we can find $\hat A_x\in \Lambda^{a+1/2}(\hat D^{123}_{r_\infty})$ with $\hat D^{123}_{r_\infty}= \hat D^1\cap D^2_{c_*r_2}\cap D^3_{c_*r_3}$ with $c_*=\prod(1-\theta_k)$ such that 
$
\db \hat A_x +\hat A_x\hat\om_x=0$ on $\hat D^{123}_{r_\infty}$,
where $\hat\om_x=(\psi_x^{-1})^*\om$. Moreover,  for some constant $C_a'$ independent of $x$,
\eq{uniform-x}
\|\hat A_x\|_{\Lambda^{a+1/2}(\hat D^{123}_{r_\infty}) }<C'_a.
\eeq
Let $ D^{123}_{x,r_\infty/2}=\psi_x^{-1}(\hat D^{123}_{r_\infty/2})$ and $A_x=\psi_x^*\hat A_x$. 
On $ D^{123}_{x,r_\infty/2}\subset D^{123}_{r_\infty}$, we also have $\db A_\infty +A_\infty\om_0=0$.
By \re{hatomA},   $\db (A_xA_\infty^{-1})=0$ on $ D^{123}_{x,r_\infty/2}$. By Hartogs's extension theorem,   the holomorphic matrix $A_xA_\infty^{-1}$ defined  on $D^{123}_{x,r_\infty/2}\setminus\pd  D^1$ extends holomorphically across the portion of the boundary of $D^{123}_{r_\infty}$ that meets $ \pd D^1$. Hence $A_\infty\in\Lambda^{a+1/2}( D^{123}_{x,r_\infty})$. Furthermore, $\|A_xA_\infty^{-1}\|_{L^\infty( D^{123}_{x,r_\infty})}<C$ where $C$ is independent of $x$. The Hartogs's extension via Cauchy formula then yields \eq{uniform}
\|A_xA_\infty^{-1}\|_{\Lambda^{a+1/2}( D^{123}_{x,r_\infty/2}) }<C_a.
\eeq
Combining \re{uniform-x}-\re{uniform}, we obtain 
\eq{unif+}
\|A_\infty\|_{\Lambda^{a+1/2}(D^{123}_{x,r_\infty/2}) }\leq C_a''C_aC_a'\|A_\infty\|_{C^0}.
\eeq

As $x$ moves in $D_{r_\infty/C_*}\cap \pd D_0$, the union of $ D^{123}_{x,r_\infty/2}$ contains a neighborhood $\hat U$ of $0$ in $\ov{D_0}$. Let  $a+1/2=k+\beta$ with $k\in\nn$ and $0<\beta\leq1$. Then \re{unif+} implies  $A\in C^k(\hat U)$. Using \re{unif+} again, we  estimate $\|A\|_{\Lambda^\beta(\hat U)}\leq C_a'''\|A_\infty\|_{C^0}$, possibly after shrinking $\hat U$; see~\ci{MR4866351}*{Prop. 7.1} for details. 
\end{proof}
 \begin{rem}
The above argument can be modified to show that when $x\in M$ and $a>0$, we can find $A\in\Lambda^{a+1}$   in a neighborhood of $x$ in $M$ for $\om_0\in\Lambda^a$, by using the Leray-Koppelman homotopy formula on unit ball of which the homotopy operator gains a full derivative. Moreover, for $\om_0 \in L^\infty$, We can still find $A\in \Lambda^\e$ for a fixed $\e<1$  in a neighborhood of $x$ in $M$.
\end{rem}


\newcommand{\doi}[1]{\href{http://dx.doi.org/#1}{doi:#1}}
\newcommand{\arxiv}[1]{\href{https://arxiv.org/pdf/#1}{arXiv:#1}}

\bibliographystyle{alpha}


\begin{bibdiv}
\begin{biblist}

\bib{MR0460725}{article}{
   author={Andreotti, A.},
   author={Hill, C.D.},
   title={E. E. Levi convexity and the Hans Lewy problem. I. Reduction to
   vanishing theorems},
   journal={Ann. Scuola Norm. Sup. Pisa Cl. Sci. (3)},
   volume={26},
   date={1972},
   pages={325--363},
   issn={0391-173X},
   review={\MR{0460725}},
}
 
 \bib{Br84}{book}{
author={Brinkmann, Ch.}
title={L\"osungsoperatoren f\"ur den Cauchy-Riemann-Komplex auf Gebieten mit
st\"uckweise glattem, streng pseudokonvexem Rand in allgemeiner Lage mit
$C^k$-Absch\"aitzungen},
series={Diplomarbeit, 1-157, Bonn},
date={1984},
}

\bib{MR1800297}{book}{
      author={Chen, S.-C.},
      author={Shaw, M.-C.},
       title={Partial differential equations in several complex variables},
      series={AMS/IP Studies in Advanced Mathematics},
   publisher={American Mathematical Society, Providence, RI; International
  Press, Boston, MA},
        date={2001},
      volume={19},
        ISBN={0-8218-1062-6},
      review={\MR{1800297}},
}


\bib{MR3961327}{article}{
      author={Gong, X.},
       title={H\"{o}lder estimates for homotopy operators on strictly
  pseudoconvex domains with {$C^2$} boundary},
        date={2019},
        ISSN={0025-5831},
     journal={Math. Ann.},
      volume={374},
      number={1-2},
       pages={841\ndash 880},
         url={https://doi.org/10.1007/s00208-018-1693-9},
      review={\MR{3961327}},
}

\bib{MR4866351}{article}{
   author={Gong, X.},
   title={On regularity of $\overline\partial$-solutions on $a_q$ domains
   with $C^2$ boundary in complex manifolds},
   journal={Trans. Amer. Math. Soc.},
   volume={378},
   date={2025},
   number={3},
   pages={1771--1829},
   issn={0002-9947},
   review={\MR{4866351}},
   doi={10.1090/tran/9315},
}

\bib{gong-shi-nn}{article}{ 
      title={Global Newlander-Nirenberg theorem on domains with finite smooth boundary in complex manifolds}, 
      author={Gong, X.},
      author={Shi, Z.},
      year={2025},
      eprint={2410.09334},
      archivePrefix={arXiv},
      primaryClass={math.CV},
      url={https://arxiv.org/abs/2410.09334}, 
}

\bib{MR2742034}{article}{
   author={Gong, X.},
   author={Webster, S.M.},
   title={Regularity for the CR vector bundle problem I},
   journal={Pure Appl. Math. Q.},
   volume={6},
   date={2010},
   number={4},
   pages={983--998},
   issn={1558-8599},
   review={\MR{2742034}},
   doi={10.4310/PAMQ.2010.v6.n4.a1},
}

\bib{MR2829316}{article}{
      author={Gong, X.},
      author={Webster, S.M.},
       title={Regularity for the {CR} vector bundle problem {II}},
        date={2011},
        ISSN={0391-173X},
     journal={Ann. Sc. Norm. Super. Pisa Cl. Sci. (5)},
      volume={10},
      number={1},
       pages={129\ndash 191},
      review={\MR{2829316}},
}

\bib{MR2868966}{article}{
    AUTHOR = {Gong, X.},
    author={Webster, S.M.},
     TITLE = {Regularity in the local {CR} embedding problem},
   JOURNAL = {J. Geom. Anal.},
  FJOURNAL = {Journal of Geometric Analysis},
    VOLUME = {22},
      YEAR = {2012},
    NUMBER = {1},
     PAGES = {261--293},
      ISSN = {1050-6926,1559-002X},
   MRCLASS = {32V30 (35N10)},
  MRNUMBER = {2868966},
MRREVIEWER = {Francine\ A.\ Meylan},
       DOI = {10.1007/s12220-010-9192-6},
       URL = {https://doi.org/10.1007/s12220-010-9192-6},
}

\bib{MR273057}{article}{
      author={Grauert, H.},
      author={Lieb, I.},
       title={Das {R}amirezsche {I}ntegral und die {L}\"{o}sung der {G}leichung
  {$\bar \partial f=\alpha $} im {B}ereich der beschr\"{a}nkten {F}ormen},
        date={1970},
        ISSN={0035-4996},
     journal={Rice Univ. Stud.},
      volume={56},
      number={2},
       pages={29\ndash 50 (1971)},
      review={\MR{273057}},
}

\bib{MR0249660}{article}{
    AUTHOR = {Henkin, G.M.},
     TITLE = {Integral representation of functions which are holomorphic in
              strictly pseudoconvex regions, and some applications},
   JOURNAL = {Mat. Sb. (N.S.)},
    VOLUME = {78 (120)},
      YEAR = {1969},
     PAGES = {611--632},
   MRCLASS = {32.22},
  MRNUMBER = {0249660},
MRREVIEWER = {H. Tornehave},
}

\bib{MR0774049}{book}{
   author={Henkin, G.M.},
   author={Leiterer, J.},
   title={Theory of functions on complex manifolds},
   series={Monographs in Mathematics},
   volume={79},
   publisher={Birkh\"{a}user Verlag, Basel},
   date={1984},
   pages={226},
   isbn={3-7643-1477-8},
   review={\MR{0774049}},
}

\bib{MR986248}{book}{
      author={Henkin, G.M.},
      author={Leiterer, J.},
       title={Andreotti-{G}rauert theory by integral formulas},
      series={Progress in Mathematics},
   publisher={Birkh\"{a}user Boston, Inc., Boston, MA},
        date={1988},
      volume={74},
        ISBN={0-8176-3413-4},
  url={https://doi.org/10.1007/978-1-4899-6724-4},
      review={\MR{986248}},
}

\bib{MR0293121}{article}{
      author={Henkin, G.M.},
      author={Romanov, A.V.},
       title={Exact {H}\"{o}lder estimates of the solutions of the {$\bar
  \delta $}-equation},
        date={1971},
        ISSN={0373-2436},
     journal={Izv. Akad. Nauk SSSR Ser. Mat.},
      volume={35},
       pages={1171\ndash 1183},
      review={\MR{0293121}},
}

\bib{MR1045639}{book}{
      author={H\"ormander, L.},
       title={An introduction to complex analysis in several variables},
     edition={Third},
      series={North-Holland Mathematical Library},
   publisher={North-Holland Publishing Co., Amsterdam},
        date={1990},
      volume={7},
        ISBN={0-444-88446-7},
      review={\MR{1045639}},
}

\bib{MR0281944}{article}{
      author={Kerzman, N.},
       title={H\"older and {$L^{p}$} estimates for solutions of {$\bar \partial
  u=f$} in strongly pseudoconvex domains},
        date={1971},
        ISSN={0010-3640},
     journal={Comm. Pure Appl. Math.},
      volume={24},
       pages={301\ndash 379},
         url={https://doi.org/10.1002/cpa.3160240303},
      review={\MR{0281944}},
}

\bib{MR0909698}{book}{
   author={Kobayashi, S.},
   title={Differential geometry of complex vector bundles},
   series={Publications of the Mathematical Society of Japan},
   volume={15},
   note={Kan\^o{} Memorial Lectures, 5},
   publisher={Princeton University Press, Princeton, NJ; Princeton
   University Press, Princeton, NJ},
   date={1987},
   pages={xii+305},
   isbn={0-691-08467-X},
   review={\MR{0909698}},
   doi={10.1515/9781400858682},
}

\bib{MR131882}{article}{,
    AUTHOR = {Koszul, J.-L.},
    author={Malgrange, B.},
     TITLE = {Sur certaines structures fibr\'ees complexes},
   JOURNAL = {Arch. Math. (Basel)},
  FJOURNAL = {Archiv der Mathematik},
    VOLUME = {9},
      YEAR = {1958},
     PAGES = {102--109},
      ISSN = {0003-889X,1420-8938},
   MRCLASS = {53.52 (32.40)},
  MRNUMBER = {131882},
MRREVIEWER = {H.\ R\"ohrl},
       DOI = {10.1007/BF02287068},
       URL = {https://doi-org.ezproxy.library.wisc.edu/10.1007/BF02287068},
}

\bib{MR1621967}{article}{
   author={Laurent-Thi\'{e}baut, Ch.},
   author={Leiterer, J.},
   title={The Andreotti-Vesentini separation theorem and global homotopy
   representation},
   journal={Math. Z.},
   volume={227},
   date={1998},
   number={4},
   pages={711--727},
   issn={0025-5874},
   review={\MR{1621967}},
    url={https://doi.org/10.1007/PL00004401},
}

\bib{MR283235}{article}{
    AUTHOR = {Lieb, I.},
     TITLE = {Die {C}auchy-{R}iemannschen {D}ifferentialgleichungen auf
              streng pseudokonvexen {G}ebieten. {B}eschr\"ankte
              {L}\"osungen},
   JOURNAL = {Math. Ann.},
  FJOURNAL = {Mathematische Annalen},
    VOLUME = {190},
      YEAR = {1970/71},
     PAGES = {6--44},
      ISSN = {0025-5831,1432-1807},
   MRCLASS = {32.22},
  MRNUMBER = {283235},
MRREVIEWER = {H.\ B.\ Laufer},
       DOI = {10.1007/BF01349966},
       URL = {https://doi-org.ezproxy.library.wisc.edu/10.1007/BF01349966},
}

\bib{MR597825}{article}{
      author={Lieb, I.},
      author={Range, R.M.},
       title={L\"{o}sungsoperatoren f\"{u}r den {C}auchy-{R}iemann-{K}omplex
  mit {${\mathcal C}^{k}$}-{A}bsch\"{a}tzungen},
        date={1980},
        ISSN={0025-5831},
     journal={Math. Ann.},
      volume={253},
      number={2},
       pages={145\ndash 164},
         url={https://doi.org/10.1007/BF01578911},
      review={\MR{597825}},
}

\bib{MR928297}{article}{
      author={Michel, J.},
       title={Randregularit\"{a}t des {$\overline\partial$}-{P}roblems f\"{u}r
  st\"{u}ckweise streng pseudokonvexe {G}ebiete in {${\bf C}^n$}},
        date={1988},
        ISSN={0025-5831},
     journal={Math. Ann.},
      volume={280},
      number={1},
       pages={45\ndash 68},
         url={https://doi.org/10.1007/BF01474180},
      review={\MR{928297}},
}

\bib{MR1038709}{article}{
      author={Michel, J.},
      author={Perotti, A.},
       title={{$C^k$}-regularity for the {$\overline\partial$}-equation on
  strictly pseudoconvex domains with piecewise smooth boundaries},
        date={1990},
        ISSN={0025-5874},
     journal={Math. Z.},
      volume={203},
      number={3},
       pages={415\ndash 427},
         url={https://doi.org/10.1007/BF02570747},
      review={\MR{1038709}},
}

\bib{MR1135535}{article}{
    AUTHOR = {Peters, K.},
     TITLE = {Solution operators for the {$\overline\partial$}-equation on
              nontransversal intersections of strictly pseudoconvex domains},
   JOURNAL = {Math. Ann.},
  FJOURNAL = {Mathematische Annalen},
    VOLUME = {291},
      YEAR = {1991},
    NUMBER = {4},
     PAGES = {617--641},
      ISSN = {0025-5831},
   MRCLASS = {32A25 (32F20 35N15)},
  MRNUMBER = {1135535},
MRREVIEWER = {Harold P. Boas},
       DOI = {10.1007/BF01445231},
       URL = {https://doi.org/10.1007/BF01445231},
}

\bib{MR338450}{article}{
      author={Range, R.M.},
      author={Siu, Y.-T.},
       title={Uniform estimates for the {$\bar \partial $}-equation on domains
  with piecewise smooth strictly pseudoconvex boundaries},
        date={1973},
        ISSN={0025-5831},
     journal={Math. Ann.},
      volume={206},
       pages={325\ndash 354},
         url={https://doi.org/10.1007/BF01355986},
      review={\MR{338450}},
}

\bib{MR1721827}{article}{
   author={Rychkov, V.S.},
   title={On restrictions and extensions of the Besov and Triebel-Lizorkin
   spaces with respect to Lipschitz domains},
   journal={J. London Math. Soc. (2)},
   volume={60},
   date={1999},
   number={1},
   pages={237--257},
   issn={0024-6107},
   review={\MR{1721827}},
}


\bib{MR4688544}{article}{
      author={Shi, Z.},
      author={Yao, L.},
      title={A solution operator for the $\overline\partial$ equation in
  Sobolev spaces of negative index},
   journal={Trans. Amer. Math. Soc.},
   volume={377},
        date={2024},
       pages={1111\ndash 1139},
        url={https://doi.org/10.1090/tran/9066},
        review={\MR{4688544}},
}

\bib{MR4861589}{article}{
   author={Shi, Z.},
   author={Yao, L.},
   title={Sobolev $\frac 12$ estimate for the $\overline\partial$ equation
   on strictly pseudoconvex domains with $C^2$ boundary},
   journal={Amer. J. Math.},
   volume={147},
   date={2025},
   number={1},
   pages={235--277},
   issn={0002-9327},
   review={\MR{4861589}},
}

\bib{MR330515}{article}{
      author={Siu, Y.-T.},
       title={The {$\bar \partial $} problem with uniform bounds on
  derivatives},
        date={1974},
        ISSN={0025-5831},
     journal={Math. Ann.},
      volume={207},
       pages={163\ndash 176},
         url={https://doi.org/10.1007/BF01362154},
      review={\MR{330515}},
}

\bib{MR999729}{article}{
      author={Webster, S.M.},
       title={A new proof of the {N}ewlander-{N}irenberg theorem},
        date={1989},
        ISSN={0025-5874},
     journal={Math. Z.},
      volume={201},
      number={3},
       pages={303\ndash 316},
         url={https://doi.org/10.1007/BF01214897},
      review={\MR{999729}},
}

 \bib{MR0995504}{article}{
   author={Webster, S.M.},
   title={On the proof of Kuranishi's embedding theorem},
   language={English, with French summary},
   journal={Ann. Inst. H. Poincar\'e{} Anal. Non Lin\'eaire},
   volume={6},
   date={1989},
   number={3},
   pages={183–207},
   issn={0294-1449},
   review={\MR{0995504}},
}

\bib{MR1128608}{article}{
   author={Webster, S.M.},
   title={The integrability problem for CR vector bundles},
   conference={
      title={Several complex variables and complex geometry, Part 3},
      address={Santa Cruz, CA},
      date={1989},
   },
   book={
      series={Proc. Sympos. Pure Math.},
      volume={52, Part 3},
      publisher={Amer. Math. Soc., Providence, RI},
   },
   isbn={0-8218-1491-5},
   date={1991},
   pages={355--368},
   review={\MR{1128608}},
   doi={10.1090/pspum/052.3/1128608},
}

\bib{yaoc2}{article}{
      title={A universal $\overline\partial$ solution operator on nonsmooth strongly pseudoconvex domains}, 
      author={Yao, L.},
      year={2024},
      eprint={2412.20312},
      archivePrefix={arXiv},
      primaryClass={math.CV},
      url={https://arxiv.org/abs/2412.20312}, 
}


\end{biblist}
\end{bibdiv}

\end{document}